\theoremstyle{plain}
\newtheorem{thm}{Theorem}[section]
\newtheorem{lemma}[thm]{Lemma}
\newtheorem{conj}[thm]{Conjecture}
\newtheorem{prop}[thm]{Proposition}
\theoremstyle{definition}
\newtheorem{example}[thm]{Example}
\def\ker{\mathop{\hbox{Ker}}\nolimits}
\newcommand{\fra}{\mathfrak{a}}
\newcommand{\frb}{\mathfrak{b}}
\newcommand{\frg}{\mathfrak{g}}
\newcommand{\frh}{\mathfrak{h}}
\newcommand{\frk}{\mathfrak{k}}
\newcommand{\frl}{\mathfrak{l}}
\newcommand{\frp}{\mathfrak{p}}
\newcommand{\frq}{\mathfrak{q}}
\newcommand{\frt}{\mathfrak{t}}
\newcommand{\fru}{\mathfrak{u}}
\newcommand{\bbC}{\mathbb{C}}
\newcommand{\bbN}{\mathbb{N}}
\newcommand{\bbR}{\mathbb{R}}
\newcommand{\bbZ}{\mathbb{Z}}
\newcommand{\caL}{\mathcal{L}}
\newcommand{\caR}{\mathcal{R}}
\newcommand{\be}{\begin {equation}}
\newcommand{\ee}{\end {equation}}
\newcommand{\bp}{\begin {proof}}
\newcommand{\ep}{\end {proof}}
\begin{document}

\title[A finiteness result for Dirac cohomology]
{Unitary Representations with Dirac cohomology:\\ a finiteness result for complex Lie groups}

\author{Jian Ding}
\address[Ding]{School of Mathematics, Hunan University, Changsha 410082,
P.~R.~China}
\email{dingjain@hnu.edu.cn}

\author{Chao-Ping Dong}
\address[Dong]{School of Mathematics, Hunan University, Changsha 410082,
P.~R.~China}
\email{chaopindong@163.com}
\thanks{Dong is supported by NSFC grant 11571097 and the China Scholarship Council.}

\abstract{Let $G$ be a connected complex simple Lie group, and let $\widehat{G}^{\mathrm{d}}$ be the set of all equivalence classes of irreducible unitary representations with non-vanishing Dirac cohomology. We show that $\widehat{G}^{\mathrm{d}}$ consists of two parts: finitely many scattered representations, and finitely many strings of representations.  Moreover,  the strings of $\widehat{G}^{\mathrm{d}}$ come from $\widehat{L}^{\mathrm{d}}$ via cohomological induction and they are all in the good range. Here $L$ runs over the Levi factors of proper $\theta$-stable  parabolic subgroups of $G$. It follows that figuring out $\widehat{G}^{\mathrm{d}}$ requires a finite calculation in total. As an application, we report a complete description of $\widehat{F}_4^{\mathrm{d}}$.}
\endabstract

\subjclass[2010]{Primary 22E46.}

\keywords{Dirac cohomology, good range, spin norm, unitary representation.}

\maketitle

\section{Introduction}
%%   Use unstarred sectioning commands to obtain automatic numeration.
%% And don't type dots at the end of a heads.
%%   NEW! The number of accessible sectioning commands was highly restricted.
%% Here is the whole list: \section \subsection \subsubsection. This keeps
%% you from making the structure of text too complex.

In 1928, by using matrix algebra, Dirac introduced the eponymous operator   in his description of the wave function of the spin-$\frac{1}{2}$ massive particles such as electrons and quarks \cite{Di}. This operator was a square root of the wave operator, and it led to the foundational Dirac equation in quantum mechanics.

The influence of Dirac equation was not restricted within physics. For instance, based on earlier works by Harish-Chandra \cite{HC1, HC2},
Parthasarathy introduced the Dirac operator for semisimple Lie groups to give a geometric construction for most of the discrete series representations \cite{P1}  in 1972.  A byproduct is Parthasarathy's Dirac inequality \eqref{Dirac-inequality}, which effectively detects non-unitarity. This project was completed by Atiyah and Schmid: they showed that all the discrete series can be found in the kernel of the Dirac operator \cite{AS}.

To sharpen the Dirac inequality, and to understand the unitary dual better, Vogan introduced Dirac cohomology in 1997. See \eqref{def-Dirac-cohomology}. It was obvious from the definition that Dirac cohomology is an invariant for Lie group representations.
A subsequent interesting problem was to classify all the irreducible unitary representations with non-vanishing Dirac cohomology.
Huang and Pand\v zi\'c \cite{HP} proved the Vogan conjecture in 2002. Their work is foundational for the computation of Dirac cohomology. The first aim of this paper is to report a finiteness theorem for the classification for \emph{complex} Lie groups. Inspired by Salamanca-Riba \cite{Sa}, Huang, Kang and Pand\v zi\'c \cite{HKP}, we will adopt the cohomological induction approach.

Now let us be more precise. Let $G$ be a connected complex simple Lie group. We view $G$ as a real Lie group, and let $\theta$ be the Cartan involution of $G$.
Then $K:=G^{\theta}$ is a maximal compact subgroup of $G$.
Write the Lie algebra of $G$ (resp. $K$) as $\frg_0$ (resp. $\frk_0$). We will drop the subscripts to denote the complexifications. Let $\frg_0=\frk_0+\frp_0$ be the Cartan decomposition.  Fix
an orthonormal basis $Z_1,\dots, Z_n$ of $\frp_0$ with respect to
the inner product induced by the Killing form $\langle\, ,\,\rangle$. Let $U(\frg)$ be the
universal enveloping algebra of $\frg$ and let $C(\frp)$ be the
Clifford algebra of $\frp$ with respect to $\langle\, ,\,\rangle$. The Dirac operator
$D\in U(\frg)\otimes C(\frp)$ is defined as
$$D=\sum_{i=1}^{n}\, Z_i \otimes Z_i.$$
It is easy to check that $D$ does not depend on the choice of the
orthonormal basis $\{Z_i\}_{i=1}^{n}$, and it is $K$-invariant for the diagonal
action of $K$ given by adjoint actions on both factors. Let $\Delta: \frk \to U(\frg)\otimes C(\frp)$ be given by $\Delta(X)=X\otimes 1 + 1\otimes \alpha(X)$, where $\alpha$ is the action map $\frk\to \mathfrak{s}\mathfrak{o}(\frp)$ followed by the usual identifications $\mathfrak{s}\mathfrak{o}(\frp)\cong\wedge^2(\frp)\hookrightarrow C(\frp)$. We denote the image of $\frk$ by $\frk_{\Delta}$, and denote by $\Omega_{\frg}$ (resp. $\Omega_{\frk}$)
the Casimir operator of $\frg$ (resp. $\frk$). Let $\Omega_{\frk_{\Delta}}$ be the image of $\Omega_{\frk}$ under $\Delta$. Then as was firstly obtained by Parthasarathy \cite{P1}, we have
\begin{equation}\label{D-square}
D^2=-\Omega_{\frg}\otimes 1 + \Omega_{\frk_{\Delta}} + (\|\rho_c\|^2-\|\rho\|^2) 1\otimes 1.
\end{equation}

Let $\widetilde{K}$ be the spin double cover of $K$, which is a subgroup of $K\times \text{Spin}\,\frp_0$. Let $\pi$ be a
($\frg$, $K$)-module, and let $S_G$ be a spin module for
$C(\frp)$. Then $\pi\otimes S_G$ is a $(U(\frg)\otimes C(\frp),
\widetilde{K})$ module. Indeed, $U(\frg)\otimes C(\frp)$ acts on $\pi\otimes S_G$ in
the obvious way, while $\widetilde{K}$ acts on $\pi$
through $K$ and on $S_G$ through the spin group
$\text{Spin}\,{\frp_0}$.
In particular, the Dirac operator $D$ acts on $\pi\otimes S_G$, and the Dirac cohomology of a $(\frg, K)$-module $\pi$ is defined as the $\widetilde{K}$-module
\begin{equation}\label{def-Dirac-cohomology}
H_D(\pi)=\text{Ker}\, D/ (\text{Im} \, D \cap \text{Ker} D).
\end{equation}

We care the most about the case when $\pi$ is unitary. Then $D$ is self-adjoint with respect to a natural inner product on $\pi\otimes S_G$, and we have
\begin{equation}\label{Dirac-unitary}
H_D(\pi)=\text{Ker}\, D=\text{Ker}\, D^2.
\end{equation}
Note that $D^2$ has non-negative eigenvalue on any $\widetilde{K}$-type of $\pi\otimes S_G$. Utilizing this and \eqref{D-square}, one  deduces Parthasarathy's Dirac operator inequality: \begin{equation}\label{Dirac-inequality}
\|\gamma+\rho_c\|\geq \|\Lambda\|,
\end{equation}
where $\gamma$ is the highest weight of any $\widetilde{K}$-type of $\pi\otimes S_G$, and $\Lambda$ is the infinitesimal character of $\pi$. Moreover, by Theorem \ref{thm-HP}, it becomes equality on some $\widetilde{K}$-types of $\pi\otimes S_G$ if and only if $H_D(\pi)$ is non-vanishing (see Proposition \ref{prop-D-spin-lowest} for more).

Let $\widehat{G}^{\mathrm{d}}$ be the set of all equivalence classes of irreducible unitary representations of $G$ with nonzero Dirac cohomology.
The first result of the current paper is the following description of $\widehat{G}^{\mathrm{d}}$.

\medskip
\noindent\textbf{Theorem A.}
\emph{The set $\widehat{G}^{\mathrm{d}}$ for a connected complex simple Lie group consists of two parts:
\begin{itemize}
\item[a)] finitely many scattered modules (the scattered part); and
\item[b)] finitely many strings of modules (the string part).
\end{itemize}
Here the scattered part consists exactly of the unitary modules $J(\lambda, -s\lambda)$ in \eqref{BP} such that each simple reflection occurs in any reduced expression of the involution $s$, and that $H_D(J(\lambda, -s\lambda))\neq 0$.
Moreover,  modules in the string part of $G$ are all cohomologically induced from the scattered part of $\widehat{L}_{\mathrm{ss}}^{\mathrm{d}}$ tensored with one-dimensional unitary characters of $L$, and they are all in the good range. Here $L$ runs over the Levi subgroups of proper $\theta$-stable parabolic subgroups of $G$,  and $L_{\mathrm{ss}}$ denotes the semisimple factor of $L$.}
\medskip

The proof involves the following ingredients:
an analysis of Parthasarathy's Dirac inequality, results from Vogan \cite{Vog84}, and Theorem 6.1 of \cite{D}. The key idea is arranging the candidate representations into $s$-families, see Section \ref{sec-s-family}.
In view of Theorem A, to figure out $\widehat{G}^{\mathrm{d}}$, it suffices to understand the scattered parts of $\widehat{G}^{\mathrm{d}}$ and $\widehat{L}_{\mathrm{ss}}^{\mathrm{d}}$ for the finitely many Levis.
Furthermore, Proposition \ref{prop-I(s)-empty} says that it boils down to considering finitely many candidate representations to sieve out the scattered part of $\widehat{G}^{\mathrm{d}}$.
Therefore, pinning down $\widehat{G}^{\mathrm{d}}$ requires a finite calculation in total. From this aspect, we interpret Theorem A as a \emph{finiteness} result.

We introduce a computing method that allows us to sieve out the finitely many candidate representations more efficiently. The basic idea is to study the distribution of the spin norm along the Vogan pencil \cite{Vog81} starting from the lowest $K$-type, see Section \ref{sec-comp} for more. Thanks to the breakthrough in achieving an algorithm for computing unitarity by Adams, van Leeuwen, Trapa and Vogan \cite{ALTV}, and thanks to the recent development of the software \texttt{atlas} \cite{At}, one can eventually handle these finitely many candidates completely in low rank cases.

As an application, let us report the following complete description of $\widehat{F}_4^{\mathrm{d}}$.

\medskip
\noindent\textbf{Theorem B.}
\emph{The set $\widehat{F}_4^{\mathrm{d}}$ for the complex $F_4$ consists of ten scattered representations (see Table \ref{table-F4-scattered-part}) whose spin-lowest $K$-types are all unitarily small, and thirty strings of representations (see Table \ref{table-F4-string-part}).
Moreover, each representation $\pi\in\widehat{F}_4^{\mathrm{d}}$ has a unique spin-lowest $K$-type, and this $K$-type occurs exactly once.}
\medskip

In Theorem B, the notion unitarily small (\emph{u-small} for short) $K$-type was introduced by Salamanca-Riba and Vogan \cite{SV} in their unified conjecture on the shape of $\widehat{G}$, the unitary dual of $G$. The notion spin-lowest $K$-type will be recalled in Section \ref{sec-u-small-spin-norm}.

%We remark that Huang \cite{H10} told the second named author that he  announced the following conjecture at a conference: for any irreducible unitary representation $\pi$ with nonzero Dirac cohomology, any $K$-type contributing to $H_D(\pi)$ has multiplicity one. This conjecture motivates the last statement of Theorem B.

Finally, we make the following.

\medskip
\noindent\textbf{Conjecture C.}
\emph{Let $G$ be a connected complex simple Lie group. Any $\pi$ in the scattered part of $\widehat{G}^{\mathrm{d}}$ has a unique spin-lowest $K$-type which must be u-small.}
\medskip

If Conjecture C holds, then by Theorem A, we could conclude that any $\pi\in\widehat{G}^{\mathrm{d}}$ has a unique spin-lowest $K$-type.
This phenomenon does not hold for real Lie groups, see Barbasch and Pand\v zi\'c \cite{BP2}.

The paper is organized as follows.
We set up the notation and collect necessary preliminaries in Section 2. Theorem A is proved in Section 3. Then we study the Dirac cohomology of tempered representations, minimal representations and model representations in Section 4. Dirac cohomology of the spherical unitary dual is investigated in Section 5. Then we introduce a computing method and illustrate it carefully for the $G_2$ case in Section 6. Sections 7 and 8 are devoted to determining the string part and the scattered part of $\widehat{F}_4^{\mathrm{d}}$, respectively. Finally, Section 9 is an appendix indexing the $140$ involutions of $F_4$.

Throughout this paper $\bbN=\{0, 1, 2, \dots\}$, $\mathbb{P}=\{1, 2, \dots\}$
and $\frac{1}{2}\mathbb{P}=\{\frac{1}{2}, 1, \frac{3}{2}, 2, \dots\}$.

\medskip
\noindent\textbf{Added Notes.} The mathematical content of the current paper has been kept as that of https://arxiv.org/abs/1702.01876. Over the past three years, this work has inspired further progresses towards the classification of $\widehat{G}^d$. Let us mention the following ones.

\begin{itemize}
\item[$\bullet$] The computing method in Section \ref{sec-comp} has been improved in \cite{D17}. As a consequence, the set $\widehat{E_6}^d$ has been pinned down there.

\item[$\bullet$] Theorem A has been extended to real reductive Lie groups later by the second author in \cite{D17r}. However, the current approach starts from the very effective reduction \eqref{BP} carried out by Barbasch and Pand\v zi\'c \cite{BP}, while the analogue of \eqref{BP} for real reductive Lie groups is unclear yet. Thus the approach adopted in \cite{D17r} actually \emph{differs} from here. Moreover, the current language is traditional, while the one in \cite{D17r} is that of \texttt{atlas} \cite{ALTV, At}.

\item[$\bullet$] A classification of $\widehat{G}^d$ for complex classical Lie groups will be reported in \cite{BDW}. As consequences, all the relevant conjectures in \cite{BP} will be answered in the affirmative for complex classical groups.

\item[$\bullet$] An understanding  of the scattered representations of $SL(n, \bbC)$ has been given in \cite{DW}. In particular, Conjecture 5.2 of \cite{D17},  Conjecture 5.6(a) of the current paper will be confirmed.
\end{itemize}

A final remark is that in view of the recent research announcement \cite{BP3}, the classification of $\widehat{G}^d$ would hopefully have applications in automorphic forms.

\emph{Acknowledgements.}
Dong thanks the math department of MIT for offering excellent working conditions during October 2016 and September 2017. He is deeply grateful to the \texttt{atlas} mathematicians for many things, and to the referee for offering nice suggestions.

%Dong thanks his thesis adviser Prof.~Huang sincerely for sharing brilliant ideas with him during his PhD study. For instance, Huang suggested the second  author to pay attention to the u-small convex hull in 2010.

\section{Preliminaries}
Although some results in this section (say Theorems \ref{thm-HP} and \ref{thm-Vogan}, Proposition \ref{prop-D-spin-lowest}) hold for real reductive Lie groups, for simplicity, we only quote them under the assumption that $G$ is a connected complex simple Lie group.

We continue with the notation in the introduction. Let $T$ be a maximal torus of $K$. Let $\fra_0=\sqrt{-1}\frt_0$
and $A=\exp(\fra_0)$. Then up to conjugation, $H=TA$ is the unique
$\theta$-stable Cartan subgroup of $G$.
We identify
\begin{equation}\label{identifications}
\frg\cong \frg_{0} \oplus
\frg_0, \quad
\frh\cong \frh_{0} \oplus
\frh_0,\quad \frt\cong \{(x,-x) : x\in
\frh_{0} \}, \quad\fra \cong\{(x, x) : x\in
\frh_{0} \}.
\end{equation}

Fix a Borel subgroup $B$ of $G$
containing $H$. Put $\Delta^{+}(\frg_0, \frh_0)=\Delta(\frb_0, \frh_0)$.
Then we have the corresponding simple roots $\alpha_1, \dots, \alpha_l$ and fundamental weights $\varpi_1, \dots, \varpi_l$. Note that
$$
\langle \varpi_i, \check{\alpha_j}\rangle=\delta_{ij},
$$
where $\delta_{ij}$ is the Kronecker notation.
 Set $[l]:=\{1,2, \dots, l\}$.  Denote by $s_i$ the simple reflection $s_{\alpha_i}$.
Let $\rho$ be the half sum of positive roots in $\Delta^{+}(\frg_0, \frh_0)$. In this paper, we always use the fundamental weights as a basis to express a weight. That is, $[n_1, \dots, n_l]$ stands for the weight $\sum_{i=1}^{l} n_i \varpi_i$. For instance, $\rho=[1,1,1,1]$ for complex $F_4$. Set
$$
\Delta^+(\frg, \frh)=\Delta^+(\frg_0, \frh_0) \times \{0\} \cup \{0\} \times (-\Delta^+(\frg_0, \frh_0)),
$$
which is $\theta$-stable. When restricted to $\frt$, we get $\Delta^+(\frg, \frt)$, $\Delta^+(\frk, \frt)$ and $\Delta^+(\frp, \frt)$. Denote by $\rho_\frg$ (resp., $\rho_c$) the half-sum of roots in $\Delta^+(\frg, \frh)$ (resp., $\Delta^+(\frk, \frt)$).
Note that we can identify $\rho_\frg=(\rho, -\rho)$ with $2\rho\in\frh_{0}^{*}$ via \eqref{identifications}. Similarly, $\rho_c$ can be identified with $\rho\in\frh_{0}^{*}$ via \eqref{identifications}. We denote by $W$ the Weyl group
$W(\frg_0, \frh_0)$, which has identity element $e$ and longest element $w_0$. Then $W(\frg, \frh)\simeq W \times W$.

\subsection{Zhelobenko classification}
Let $(\lambda_{L}, \lambda_{R})\in \frh_0^{*}\times
\frh_0^{*}$ be such that $\lambda_{L}-\lambda_{R}$ is
a weight of a finite dimensional holomorphic representation of $G$.
Using \eqref{identifications}, we can view $(\lambda_L, \lambda_R)$ as a real-linear functional on $\frh$, and write $\bbC_{(\lambda_L, \lambda_R)}$ as the character of $H$ with differential $(\lambda_L, \lambda_R)$ (which exists). Using \eqref{identifications} again, we have
$$
\bbC_{(\lambda_L, \lambda_R)}|_{T}=\bbC_{\lambda_L-\lambda_R}, \quad \bbC_{(\lambda_L, \lambda_R)}|_{A}=\bbC_{\lambda_L+\lambda_R}.
$$
Extend $\bbC_{(\lambda_L, \lambda_R)}$ to a character of $B$, and put $$I(\lambda_{L}, \lambda_{R})
:={\rm Ind}_{B}^{G}
(
\bbC_{(\lambda_L, \lambda_R)}
)_{K-finite}
.$$

Let $V_{\delta}$ be the $K$-type with highest weight $\delta$. For convenience, we may simply refer to $V_{\delta}$ as $\delta$. We will treat $\frk$-types and $\widetilde{K}$-types similarly. Given an arbitrary weight $\mu\in\frt^*$, let $\{\mu\}$ be the unique dominant weight to which $\mu$ is conjugate under the action of $W(\frk, \frt)$.

\begin{thm}\label{thm-Zh} {\rm (Zhelobenko \cite{Zh})}
The $K$-type $V_{\{\lambda_{L}-\lambda_{R}\}}$
occurs with multiplicity one in
$I(\lambda_{L}, \lambda_{R})$. Let
$J(\lambda_L,\lambda_R)$ be the unique subquotient of
$I(\lambda_{L}, \lambda_{R})$ containing $V_{\{\lambda_{L}-\lambda_{R}\}}$.
\begin{itemize}
\item[a)] Every irreducible admissible ($\frg$, $K$)-module is of the form $J(\lambda_L,\lambda_R)$.
\item[b)] Two such modules $J(\lambda_L,\lambda_R)$ and
$J(\lambda_L^{\prime},\lambda_R^{\prime})$ are equivalent if and
only if there exists $w\in W$ such that
$w\lambda_L=\lambda_L^{\prime}$ and $w\lambda_R=\lambda_R^{\prime}$.
\item[c)] $J(\lambda_L, \lambda_R)$ admits a nondegenerate Hermitian form if and only if there exists
$w\in W$ such that $w(\lambda_L-\lambda_R) =\lambda_L-\lambda_R , w(\lambda_L+\lambda_R) = -\overline{(\lambda_L+\lambda_R)}$.
\item[d)] The representation $I(\lambda_{L}, \lambda_{R})$ is tempered if and only if $\lambda_{L}+\lambda_{R}\in i\frh_0^*$. In this case,
$I(\lambda_{L}, \lambda_{R})=J(\lambda_{L}, \lambda_{R})$.
\end{itemize}
\end{thm}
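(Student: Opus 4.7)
The plan is to prove the four parts in order, with parts (c) and (d) derived formally from (a) and (b).

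Part (a) rests on a Frobenius reciprocity computation. Since $B = TAN$ with $K\cap B = T$, for every irreducible $K$-module $V_\mu$ with highest weight $\mu$,
\[
\Hom_K(V_\mu, X(\lambda_L, \lambda_R)) \cong \Hom_T(V_\mu, \bbC_{\lambda_L - \lambda_R}),
\]
which has dimension equal to the multiplicity of the $T$-weight $\lambda_L - \lambda_R$ inside $V_\mu$. When $\mu$ is the dominant $W(\frk,\frt)$-conjugate of $\lambda_L - \lambda_R$, so that $\lambda_L - \lambda_R$ is an extremal weight of $V_\mu$, this multiplicity is exactly one. Hence the corresponding $K$-type pins down a unique irreducible subquotient, which we call $J(\lambda_L, \lambda_R)$. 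The converse half of (a) is Harish-Chandra's subquotient theorem: every irreducible admissible $(\frg, K)$-module $\pi$ embeds into some $X(\lambda_L, \lambda_R)$; matching the infinitesimal character $(\lambda_L + \rho, \lambda_R + \rho)$ together with the extremal $K$-type identifies $\pi$ with $J(\lambda_L, \lambda_R)$.

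For part (b), sufficiency comes from the long intertwining operator $X(\lambda_L, \lambda_R) \to X(w\lambda_L, w\lambda_R)$: both sides share the same distinguished irreducible subquotient, so they define the same $J$. Necessity is the harder direction: two parameters in different $W$-orbits must give inequivalent modules. Parts (c) and (d) then fall out of (a) and (b). The Hermitian dual of $X(\lambda_L, \lambda_R)$, computed from the complex conjugate of the inducing character, is $X(-\overline{\lambda_R}, -\overline{\lambda_L})$, so the existence of a nondegenerate Hermitian form on $J(\lambda_L, \lambda_R)$ translates via (b) to the existence of $w \in W$ with $w(\lambda_L - \lambda_R) = \lambda_L - \lambda_R$ and $w(\lambda_L + \lambda_R) = -\overline{\lambda_L + \lambda_R}$. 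For (d), $X(\lambda_L, \lambda_R)$ is tempered exactly when the $A$-character $\bbC_{\lambda_L + \lambda_R}$ is unitary, i.e.\ $\lambda_L + \lambda_R \in i\frh_0^*$; in this regime the classical irreducibility theorem for the unitary minimal principal series of a connected complex group (going back to Gelfand-Naimark) yields $X = J$.

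The main obstacle I anticipate is the necessity direction of (b). The infinitesimal character only identifies the $W \times W$-orbit of $(\lambda_L + \rho, \lambda_R + \rho)$, which is strictly coarser than the diagonal $W$-orbit the statement demands. To refine $W \times W$-information down to $W$-information one has to bring in the $K$-type data from (a): the $T$-character $\lambda_L - \lambda_R$ of the distinguished $K$-type is preserved only under the diagonal $W$-action, so combining it with the infinitesimal character pins down the correct orbit. Carrying this out rigorously, together with checking that the intertwining operator in the sufficiency direction actually sends the distinguished $K$-type of $X(\lambda_L,\lambda_R)$ to that of $X(w\lambda_L, w\lambda_R)$ rather than annihilating it, is the technical heart of the argument.
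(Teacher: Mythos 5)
The paper does not prove this theorem: it is stated as a classical result of Zhelobenko and cited to \cite{Zh} without any argument. There is therefore no ``paper's own proof'' to compare your attempt against, and the only meaningful assessment is of your sketch on its own terms.

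Your outline is essentially the standard route and hits the right points. The Frobenius reciprocity computation for part (a), namely
\[
\Hom_K\bigl(V_\mu, X(\lambda_L,\lambda_R)\bigr)\cong\Hom_T\bigl(V_\mu, \bbC_{\lambda_L-\lambda_R}\bigr),
\]
combined with the fact that extremal weights of an irreducible $K$-module occur with multiplicity one, is correct; together with the Harish-Chandra subquotient theorem it gives (a). You also correctly identify the crux of (b): the infinitesimal character alone only locates the $W\times W$-orbit, which is strictly coarser than the diagonal $W$-orbit the statement requires, and the $T$-parameter $\lambda_L-\lambda_R$ of the distinguished $K$-type is what lets one cut the $W\times W$-orbit down to a diagonal $W$-orbit. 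For (c), the Hermitian-dual formula you write for $X$ leads, via (b) and the reality of $\lambda_L-\lambda_R$, to exactly the condition stated. For (d), invoking irreducibility of the unitary minimal principal series of a connected complex group is the right ingredient. One small inconsistency to flag: in the paper's normalization the $W\times W$-orbit of $(\lambda_L,\lambda_R)$ itself, not of $(\lambda_L+\rho,\lambda_R+\rho)$, is the infinitesimal character of $J(\lambda_L,\lambda_R)$ (cf.\ the trivial representation having parameters $(\rho,\rho)$); your $\rho$-shift reflects a different convention for the inducing datum and would need to be reconciled with the paper's setup. Beyond that, the proposal is a sketch, not a proof: the necessity direction of (b), the detailed behavior of the long intertwining operator on the distinguished $K$-type, and the irreducibility argument in (d) all require substantial technical input that you correctly anticipate but do not supply. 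That is appropriate given that the paper itself treats the whole theorem as a black box.
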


Up to equivalence,   $J(\lambda_L, \lambda_R)$ is the unique irreducible admissible $(\frg, K)$-module with lowest $K$-type $\{\lambda_{L}-\lambda_{R}\}$ and  infinitesimal character  the $W\times W$ orbit of $(\lambda_L, \lambda_R)$.
We call the pair $(\lambda_{L}, \lambda_{R})$  \emph{Zhelobenko parameters} of $J(\lambda_{L}, \lambda_{R})$. For convenience, we will also call $\lambda_L-\lambda_R$ (resp. $\lambda_L+\lambda_R$) the $T$-parameter (resp. $A$-parameter) of $J(\lambda_{L}, \lambda_{R})$.

\subsection{Dirac cohomology}

We embed $\frt^{*}$ as a subspace of $\frh^{*}$ by setting  the linear functionals on $\frt$ to be zero on $\fra$. Now let us state the proven Vogan conjecture.

\begin{thm}{\rm (Huang and Pand\v zi\'c \cite{HP})}\label{thm-HP}
Let $\pi$ be an irreducible ($\frg$, $K$)-module.
Assume that the Dirac
cohomology of $\pi$ is nonzero, and that it contains the $\widetilde{K}$-type $E_{\gamma}$ with highest weight $\gamma\in\frt^{*}\subset\frh^{*}$. Then the infinitesimal character of $\pi$ is conjugate to
$\gamma+\rho_{c}$ under $W(\frg,\frh)$.
\end{thm}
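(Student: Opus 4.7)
The plan is to derive the theorem from the algebraic form of Vogan's conjecture: for every $z \in Z(\frg)$ there exist a unique $\zeta(z) \in Z(\frk_{\Delta})$ and some $a \in U(\frg) \otimes C(\frp)$ with
$$z \otimes 1 \;=\; \zeta(z) + D a + a D.$$
Moreover, under the Harish-Chandra isomorphisms $Z(\frg) \cong S(\frh)^{W(\frg,\frh)}$ and $Z(\frk_{\Delta}) \cong S(\frt)^{W(\frk,\frt)}$, the resulting algebra homomorphism $\zeta$ should correspond to restriction of polynomials along the embedding $\frt^{*} \hookrightarrow \frh^{*}$ obtained by extending weights by zero on $\fra$. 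This is the central algebraic input; everything else in the proof is formal.

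Granting this identity, the theorem follows quickly. Let $\Lambda \in \frh^{*}$ represent the infinitesimal character of $\pi$, and let $p_z \in S(\frh)^{W(\frg,\frh)}$ be the Harish-Chandra image of $z \in Z(\frg)$, so that $z \otimes 1$ acts on $\pi \otimes S_G$ by the scalar $p_z(\Lambda)$. Because $D$ annihilates $H_D(\pi)$ while the identity $z \otimes 1 = \zeta(z) + D a + a D$ holds globally, the operators $z \otimes 1$ and $\zeta(z)$ induce the same scalar on any $\widetilde{K}$-isotypic component of $H_D(\pi)$. Applied to $E_{\gamma}$ of highest weight $\gamma$, the element $\zeta(z) \in Z(\frk_{\Delta})$ acts by $p_z(\gamma + \rho_c)$, with $\gamma + \rho_c$ regarded as an element of $\frh^{*}$ via the above embedding. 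Equating the two scalars yields $p_z(\Lambda) = p_z(\gamma + \rho_c)$ for every $W(\frg,\frh)$-invariant polynomial $p_z$, and by Chevalley's theorem this forces $\Lambda$ and $\gamma + \rho_c$ to lie in the same $W(\frg,\frh)$-orbit.

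The main obstacle is therefore the algebraic identity itself. To construct $\zeta$, I would filter $U(\frg) \otimes C(\frp)$ by the combined degree of the two tensor factors, study the super-commutator $d := [D, \cdot\,]$ on the associated graded $S(\frg) \otimes \wedge \frp$ (where it becomes a Koszul-type differential), and argue by induction on filtration degree that every $\frk_{\Delta}$-invariant element in $Z(\frg) \otimes 1$ is congruent modulo $\mathrm{Im}\, d$ to a unique element of $Z(\frk_{\Delta})$. Identifying $\zeta$ with polynomial restriction on Harish-Chandra images is a further delicate step that I would handle by tracking principal symbols through the filtration and benchmarking against the Casimir via \eqref{D-square}, where the formula $D^{2} = -\Omega_{\frg} \otimes 1 + \Omega_{\frk_{\Delta}} + (\|\rho_c\|^{2} - \|\rho\|^{2}) 1 \otimes 1$ directly realizes $\zeta$ on the generator of lowest nontrivial degree. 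These two algebraic steps---the existence of $\zeta$ and its identification with restriction---are precisely the contribution of Huang and Pand\v zi\'c; once they are in hand, the reduction above completes the proof.
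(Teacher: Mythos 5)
The paper does not prove this theorem; it is quoted as Theorem 2.3 of Huang--Pand\v zi\'c \cite{HP}. Your sketch faithfully reproduces the structure of their original argument: the algebraic identity $z\otimes 1=\zeta(z)+Da+aD$ (Vogan's conjecture), the identification of $\zeta$ with restriction under the two Harish-Chandra isomorphisms, and the formal deduction via $\ker D$. Two small points worth recording: in the Huang--Pand\v zi\'c theorem the element $a$ is taken in the $K$-invariant subalgebra $(U(\frg)\otimes C(\frp))^{K}$, which is what makes $\zeta$ well-defined and the induction on the filtration close up; and the formal step hinges on observing that $z\otimes 1$ and $\zeta(z)$ both commute with $D$, so that $Da(v)$ actually lies in $\ker D\cap\operatorname{Im}D$ for $v\in\ker D$ and hence vanishes in $H_D(\pi)$---you should make this explicit rather than asserting it. With those remarks, your proposal is a correct high-level account of the cited proof.
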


\subsection{Cohomological induction}
For complex Lie groups, cohomological induction is essentially equivalent to the ordinary parabolic induction. However, adopting the former setting will give us convenience in utilizing existing results on cohomological induction. Let us give a brief review. Fix a nonzero element $H\in i\frt_0$, then  a $\theta$-stable parabolic subalgebra $\frq= \frl\oplus\fru$ of $\frg$ can be defined as the sum of nonnegative eigenspaces of $\mathrm{ad}(H)$. Here the Levi subalgebra $\frl$ of
$\frq$ is the zero eigenspace of $\mathrm{ad}(H)$, while the
nilradical $\fru$ of $\frq$ is the sum of positive eigenspaces of
$\mathrm{ad}(H)$.  Then it follows from $\theta(H)=H$ that $\frl$, $\fru$ and $\frq$ are all
$\theta$-stable. Let $L=N_G(\frq)$, which is connected and has $K_L:=L\cap K$ as a maximal compact subgroup.

Let us arrange the positive root systems in a compatible way, that
is, $\Delta(\fru)\subseteq \Delta^{+}(\frg,\frh)$ and set
$\Delta^{+}(\frl, \frh)=\Delta(\frl, \frh)\cap
\Delta^{+}(\frg,\frh)$.  We denote by $\rho(\fru)$  the half sum of roots in
$\Delta(\fru,\frh)$.

Let $Z$ be an ($\frl$, $K_L$) module. Cohomological induction functors attach to $Z$ certain ($\frg, K$)-modules $\caL_j(Z)$ and $\caR^j(Z)$, where $j$ is a nonnegative integer. For a definition, see Chapter 2 of \cite{KV}.
Suppose that $\lambda$ is the infinitesimal character of $Z$. We say
$Z$ or $\lambda$ is {\it in good range} if
\begin{equation}\label{def-good}
\mathrm{Re}\langle \lambda +\rho(\fru),\, \alpha \rangle >
0, \quad \forall \alpha\in \Delta(\fru, \frh).
 \end{equation}

Now we are able to state the results pertaining to cohomological induction that we shall need in this paper. Note that the first theorem was mainly due to Vogan \cite{Vog84}, while the second one was obtained in the second named author's thesis, see also Theorem 6.1 of \cite{D}.

\begin{thm}\label{thm-Vogan}
{\rm (Theorems 1.2 and 1.3 of \cite{Vog84}, or  Theorems 0.50 and 0.51 of \cite{KV})}
Suppose the admissible
 ($\frl$, $L\cap K$)-module $Z$ is in the good range.  Then we have
\begin{itemize}
\item[a)] $\caL_j(Z)=\caR^j(Z)=0$ for $j\neq S$, where $S:=\emph{\text{dim}}\,(\fru\cap\frk)$.
\item[b)] $\caL_S(Z)\cong\caR^S(Z)$ as ($\frg$, $K$)-modules. They are both nonzero.
\item[c)]  $\caL_S(Z)$  is irreducible if and only if $Z$ is irreducible.
\item[d)]
$\caL_S(Z)$  is unitary if and only if $Z$ is unitary.
\end{itemize}
\end{thm}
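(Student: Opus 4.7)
The plan is to invoke the machinery of cohomological induction developed in \cite{KV}, following Vogan's original strategy; all four assertions ultimately rest on the good range condition \eqref{def-good}, which is built precisely to kill the obstructions arising in general.

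For (a) I would treat the two vanishing ranges separately. The upper vanishing $\caL_j(Z)=\caR^j(Z)=0$ for $j>S$ is essentially formal from the definitions: the underlying Zuckerman construction is computed by a Koszul-type complex of length $S=\dim(\fru\cap\frk)$, so higher derived functors vanish automatically. The lower vanishing in degrees $j<S$ is the substantive content. Here one argues via a filtration of the standard complex by ``layers'' together with an Euler--Poincar\'e / spectral sequence analysis, and the strict inequality in \eqref{def-good} is invoked to rule out infinitesimal characters that could otherwise contribute: no $W(\frl,\frh)$-translate of the infinitesimal character of $Z$ shifted by $\rho(\fru)$ is singular on the roots of $\fru$, which is exactly what is needed for the lower degrees to die.

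For (b), the isomorphism $\caL_S(Z)\cong\caR^S(Z)$ is the Hard Duality theorem of Enright--Wallach: there is a canonical natural transformation between the two middle-degree functors which becomes an isomorphism under the good-range hypothesis. Non-vanishing of $\caL_S(Z)$ is then extracted from the bottom-layer $K$-type formula, which produces an explicit $K$-type of $\caL_S(Z)$ built from any $K_L$-type of $Z$ by tensoring with the top exterior power of $\fru\cap\frp$.

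Parts (c) and (d) would follow from the principle that, in the good range, cohomological induction implements an equivalence of categories with a Bernstein-type functor serving as a two-sided inverse. For (c), this inverse transports irreducibility in both directions. Part (d) is the deepest and will be the main obstacle: it requires Vogan's signature theorem, which shows that an invariant Hermitian form on $Z$ pushes forward to an invariant form on $\caL_S(Z)$ whose signature on each $K$-type is controlled by a bottom-layer recipe. The good range is precisely what guarantees that the ``$\fru$-direction'' Shapovalov pairings feeding into this signature are positive definite, so that definiteness of the form is preserved; the converse direction uses the category equivalence to run the same argument backward and recover unitarity of $Z$ from unitarity of $\caL_S(Z)$.
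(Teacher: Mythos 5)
This theorem is not proved in the paper: it is quoted verbatim, with attribution to Theorems 1.2 and 1.3 of Vogan's \emph{Unitarizability of certain series of representations} and Theorems 0.50 and 0.51 of Knapp--Vogan, so there is no in-paper argument to compare your sketch against. Your outline is a reasonable recapitulation of how the proof actually goes in Knapp--Vogan: upper vanishing in (a) from the length of the standard complex, lower vanishing from the good-range inequality ruling out singularities on $\Delta(\fru)$; Hard Duality plus the bottom-layer $K$-type construction for (b); and the signature theorem for (d). Two points deserve flagging. First, for (b), Hard Duality alone gives a natural map between $\Pi_{S}$ and $\Gamma^{S}$ only after an appropriate $\bigwedge^{\mathrm{top}}$-twist that you should make explicit; in Knapp--Vogan the identification $\caL_S(Z)\cong\caR^S(Z)$ in good range is established via that twist together with the vanishing and a comparison of infinitesimal characters, not by a bare citation of Enright--Wallach duality. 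Second, your framing of (c) as ``an equivalence of categories with a Bernstein-type two-sided inverse'' is morally right in the good range, but the proof in Knapp--Vogan actually proceeds through the irreducibility theorem (their Theorem 8.2) using the bottom-layer map and a Langlands-quotient style uniqueness argument, not by constructing an explicit inverse functor; you would need to either prove the equivalence or redirect to that argument. As a blind sketch for a theorem the paper itself treats as a black box, this is fine, but it is a high-level roadmap rather than a proof: each of the ingredients you name (Koszul vanishing, Hard Duality, bottom-layer $K$-types, the signature theorem) is itself a substantial theorem in the cited sources.
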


\begin{thm}\label{thm-D}
Let $G$ be a connected complex simple Lie group.
Let $Z$ be an irreducible unitary
($\frl$, $K_L$) module with infinitesimal character $\lambda\in
i\frt_0^{*}$ which is dominant for
$\Delta^{+}(\frl\cap\frk, \frt)$. Assume that
\begin{equation}\label{complex-cond}
\lambda+\rho(\fru) \mbox{ is dominant integral regular for }
\Delta^{+}(\frk,\frt).
\end{equation}
Then $H_D(\caL_S(Z))$ is nonzero if and only if $H_D(Z)$ is
nonzero.
\end{thm}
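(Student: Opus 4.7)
The plan is to reduce Theorem \ref{thm-D} to the explicit computation of the Dirac cohomology of a cohomologically induced module in the good range. I would first verify that the hypotheses place us in the good range: the dominance of $\lambda$ for $\Delta^+(\frl\cap\frk,\frt)$, together with the regularity of $\lambda + \rho(\fru)$ for $\Delta^+(\frk,\frt)$, translates via the complex-group identifications \eqref{identifications} into the positivity $\langle \lambda + \rho(\fru),\alpha\rangle > 0$ for all $\alpha\in\Delta(\fru,\frh)$, because for complex $G$ every root of $\fru$ on $\frh$ restricts, up to sign, to a root in $\Delta(\fru\cap\frk,\frt)\cup\Delta(\fru\cap\frp,\frt)\subset\Delta^+(\frk,\frt)$. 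Hence Theorem \ref{thm-Vogan} applies: $\caL_S(Z)$ is nonzero, irreducible and unitary, with infinitesimal character $\lambda + \rho(\fru)$.

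Second, I would invoke a Huang--Kang--Pand\v zi\'c type formula for the Dirac cohomology of $\caL_S(Z)$ in the good range, specialized to complex groups. The key mechanism is the Clifford-module factorization $S_G \simeq S_L\otimes\bigwedge^\bullet(\fru\cap\frp)$, which, combined with the Koszul/Bernstein computation underlying $\caL_S$, yields a $\widetilde K$-equivariant identification sending a $\widetilde{K_L}$-type $E_\mu^{\widetilde{K_L}}\subset H_D(Z)$ to a $\widetilde K$-type in $H_D(\caL_S(Z))$ of highest weight
\[
\mu + \rho(\fru\cap\frp) - \rho(\fru\cap\frk).
\]
The regularity of $\lambda + \rho(\fru)$ for $\Delta^+(\frk,\frt)$ guarantees that the shifted weight is $\Delta^+(\frk,\frt)$-dominant, and therefore defines an honest nonzero $\widetilde K$-type. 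This gives the forward direction: $H_D(Z)\ne 0$ implies $H_D(\caL_S(Z))\ne 0$.

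For the converse, suppose $H_D(\caL_S(Z))$ contains a $\widetilde K$-type of highest weight $\gamma$. By Theorem \ref{thm-HP}, $\gamma + \rho_c$ is $W(\frg,\frh)$-conjugate to $\lambda + \rho(\fru)$; the regularity hypothesis pins down a unique $\Delta^+(\frk,\frt)$-dominant such $\gamma$ in this orbit, and the above shift formula can be inverted to produce $\mu = \gamma - \rho(\fru\cap\frp) + \rho(\fru\cap\frk)$, which is dominant for $\Delta^+(\frl\cap\frk,\frt)$ and must contribute a $\widetilde{K_L}$-type to $H_D(Z)$. Hence $H_D(Z)\ne 0$.

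The main obstacle is step two: establishing the precise $\widetilde K$-equivariant identification of $H_D(\caL_S(Z))$ with a shift of $H_D(Z)$. This requires analysing the Dirac operator $D_G$ on $\caL_S(Z)\otimes S_G$ through its factorization with $D_L$ on $Z\otimes S_L$ via the Clifford decomposition above, and verifying that the higher derived-functor corrections to this identification all vanish, which is exactly what the good-range hypothesis allows. This is the content of Theorem 6.1 of \cite{D}, from which the present statement follows directly.
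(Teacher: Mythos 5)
The paper does not prove Theorem \ref{thm-D} at all: it quotes it as a known result from the second author's thesis and from \cite{D}, so there is no in-paper argument to compare against. Your proposal is best read as a sketch of the proof that lives in \cite{D}, and as such it correctly identifies the two essential mechanisms --- that the hypotheses place $Z$ in the good range (so Theorem \ref{thm-Vogan} applies), and that in the good range one has an explicit $\widetilde K$-equivariant formula for $H_D(\caL_S(Z))$ obtained from $H_D(Z)$ via a Clifford-module factorization $S_G\simeq S_L\otimes\wedge^\bullet(\fru\cap\frp)$ together with the derived-functor computation. From that explicit formula both implications of the nonvanishing equivalence fall out, with the regularity of $\lambda+\rho(\fru)$ forcing the relevant weights to stay dominant.

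Two cautions. First, your concluding sentence --- ``This is the content of Theorem~6.1 of \cite{D}, from which the present statement follows directly'' --- is circular as written, since Theorem \ref{thm-D} \emph{is} Theorem~6.1 of \cite{D}; what you actually need to invoke is the explicit Dirac cohomology formula of \cite{D} (and its real analogue in \cite{DH2}), and then observe that the nonvanishing equivalence is an immediate corollary. Second, the shift $\mu + \rho(\fru\cap\frp) - \rho(\fru\cap\frk)$ that you write down is identically zero for complex $G$: by \eqref{identifications} the sets $\Delta(\fru\cap\frk,\frt)$ and $\Delta(\fru\cap\frp,\frt)$ coincide, so $\rho(\fru\cap\frp)=\rho(\fru\cap\frk)$ on $\frt^*$. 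That degeneracy is not fatal to the argument (a zero shift still gives a $\widetilde K$-type with the right infinitesimal character), but it is a sign that you have imported the general real-group shift without specializing; the formula one actually uses for complex $G$ in \cite{D} is stated differently, and one should check dominance against that version rather than against the vanishing difference.
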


\subsection{Unitarily small $K$-types, spin norm and spin lowest $K$-type}\label{sec-u-small-spin-norm}
By Theorem 6.7 of \cite{SV}, the $K$-type $\delta$ is u-small if and only if there is an expression
$$
\delta=\sum_{\beta\in \Delta(\frp, \frt)} b_{\beta}\beta \qquad (0\leq b_{\beta} \leq 1).
$$
Equivalently, the $K$-type $\delta$ is u-small if and only if $\langle \delta-2\rho, \varpi_i\rangle\leq 0$, $1\leq i\leq l$.

Given a general $K$-type $\delta$, its \emph{spin norm} is defined as
\begin{equation}\label{Spin-norm-K-type}
\|\delta\|_{\mathrm{spin}} := \| \{\delta-\rho\} + \rho \|.
\end{equation}
For any
irreducible admissible ($\frg$, $K$)-module $\pi$, we define
$\|\pi\|_{\mathrm{spin}}$ as the minimum of the spin norm of all its $K$-types.
We call $\delta$ a \emph{spin lowest $K$-type} of $\pi$ if
it occurs in $\pi$ and
$\|\delta\|_{\mathrm{spin}}=\|\pi\|_{\mathrm{spin}}$.

The following result is taken from the second named author's thesis. It is a combination of the ideas and results of
Parthasarathy \cite{P1, P2}, Vogan \cite{Vog97}, Huang and Pand\v
zi\'c (see Theorem 3.5.2 of \cite{HP2}). It suggests that spin norm and spin lowest $K$-type give the right framework for the classification of $\widehat{G}^{\mathrm{d}}$.

\begin{prop}\label{prop-D-spin-lowest}
For any irreducible unitary ($\frg$, $K$)-module $\pi$ with
infinitesimal character $\Lambda$, let $\delta$ be any $K$-type
occurring in $\pi$. Then
\begin{enumerate}
\item[a)] $\|\pi\|_{\mathrm{spin}}\geq\|\Lambda\|$, and the equality holds if and only if $H_D(\pi)$ is nonzero.
\item[b)] $\|\delta\|_{\mathrm{spin}}\geq \|\Lambda\|$, and the equality holds
if and only if $\delta$ contributes to $H_D(\pi)$.
\item[c)] If $H_D(\pi)\neq 0$, it is exactly the spin lowest $K$-types of $\pi$
that contribute to $H_D(\pi)$.
\end{enumerate}
\end{prop}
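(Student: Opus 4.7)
The plan is to combine the Parthasarathy identity \eqref{D-square}, self-adjointness of $D$ on $\pi\otimes S_G$, Theorem \ref{thm-HP}, and an analysis of the $\widetilde K$-decomposition of $V_\delta\otimes S_G$. First, on any $\widetilde K$-type $E_\gamma$ of $\pi\otimes S_G$, the Casimir $\Omega_\frg$ acts by $\|\Lambda\|^2-\|\rho\|^2$ while $\Omega_{\frk_\Delta}$ acts by $\|\gamma+\rho_c\|^2-\|\rho_c\|^2$. Substituting into \eqref{D-square} gives
\[
D^2|_{E_\gamma}=\bigl(\|\gamma+\rho_c\|^2-\|\Lambda\|^2\bigr)\,\mathrm{Id}.
\]
Unitarity of $\pi$ makes $D$ self-adjoint on $\pi\otimes S_G$, so $D^2\ge 0$; combined with \eqref{Dirac-unitary} this yields Parthasarathy's inequality $\|\gamma+\rho_c\|\ge\|\Lambda\|$ for every such $E_\gamma$, with equality iff $E_\gamma\subset H_D(\pi)$. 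Theorem \ref{thm-HP} is consistent with this: any $\widetilde K$-type contributing to $H_D(\pi)$ has $\gamma+\rho_c$ in the $W(\frg,\frh)$-orbit of $\Lambda$, hence $\|\gamma+\rho_c\|=\|\Lambda\|$.

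Next I would fix a $K$-type $\delta$ of $\pi$ and, using $\pi\otimes S_G=\bigoplus_\delta (V_\delta\otimes S_G)$ as $\widetilde K$-modules, reduce parts (b) and (c) to the identity
\[
\min\bigl\{\|\gamma+\rho_c\|:E_\gamma\subset V_\delta\otimes S_G\bigr\}=\|\{\delta-\rho\}+\rho\|=\|\delta\|_{\mathrm{spin}},
\]
with the minimum attained on the $\widetilde K$-constituent of highest weight $\{\delta-\rho\}$. In the complex setting one has $\rho_c=\rho$ and the weights of $S_G$ are of the form $\rho-\sum_{\beta\in S}\beta$ for $S\subset\Delta^+(\frp,\frt)$. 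A PRV-type component argument---equivalently, Kostant's description of the $\widetilde K$-spectrum of $V_\delta\otimes S_G$---shows that $E_{\{\delta-\rho\}}$ is indeed a constituent, giving the $\le$ direction; the $\ge$ direction is a standard Weyl-orbit/convexity estimate applied to the highest weights of the shape $\delta+\mu$ for $\mu$ a weight of $S_G$.

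Combining the two inputs, part (b) follows at once: $\|\delta\|_{\mathrm{spin}}\ge\|\Lambda\|$, with equality precisely when the spin-lowest $\widetilde K$-constituent of $V_\delta\otimes S_G$ lies in $\ker D=H_D(\pi)$, i.e.\ iff $\delta$ contributes to $H_D(\pi)$. Part (a) comes by minimizing over the $K$-types of $\pi$, and part (c) is the equality case of (b) rephrased in terms of $\pi$. The main obstacle lies in the middle paragraph---locating $E_{\{\delta-\rho\}}$ inside $V_\delta\otimes S_G$ and establishing its minimality. The inequality half is routine orbit book-keeping, but the existence half is the crux; its cleanest proof exploits the complex-group identification $\rho_c=\rho$ together with Kostant's structure theorem for $V\otimes S_G$ as a $\widetilde K$-module.
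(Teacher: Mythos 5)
Your proposal follows essentially the same path as the paper: the scalar formula for $D^2$ on a $\widetilde K$-type $E_\gamma$, self-adjointness of $D$ giving Parthasarathy's inequality with equality exactly on $\ker D^2 = H_D(\pi)$, and a PRV argument to identify the minimizing $\widetilde K$-constituent of $V_\delta\otimes S_G$ as $E_{\{\delta-\rho_c\}}$. The one shortcut the paper uses that you gesture at only vaguely (``Kostant's structure theorem'') is Lemma 2.2 of Barbasch--Pand\v zi\'c: for complex groups the spin module $S_G$ is a \emph{multiple} of the single $K$-type $V_{\rho_c}$. Once you have that, the ``middle paragraph'' obstacle you flag disappears---the $\widetilde K$-decomposition of $V_\delta\otimes S_G$ is just (copies of) $V_\delta\otimes V_{\rho_c}$, and Corollaries 1 and 2 of Theorem 2.1 in [PRV] give both that $V_{\{\delta-\rho_c\}}$ occurs and that it minimizes $\|\gamma+\rho_c\|$ over the constituents; identifying $\rho_c$ with $\rho$ under the complex-group identifications then yields $\|\{\delta-\rho\}+\rho\|=\|\delta\|_{\mathrm{spin}}$. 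So your sketch is correct and identifies the right crux; supplying the Borel--Wallach/BP fact about $S_G$ would close the gap cleanly and would match the paper's proof verbatim.
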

\begin{proof} As mentioned in \eqref{Dirac-unitary}, since $\pi$ is unitary, we have $H_D(\pi)=\ker D^2$. Moreover, by \eqref{D-square}, $D^2$ acts by the non-negative scalar
$$
\|\gamma+\rho_c\|^2-\|\Lambda\|^2
$$
on the $\widetilde{K}$-type $\gamma$ of $\pi\otimes S_G$. By Lemma 2.2 of \cite{BP}, which is a special case of Chapter II Lemma 6.9 of \cite{BW}, the spin module $S_G$ is a multiple of the $\frk$-type $\rho_c$. Moreover, $\|\gamma+\rho_c\|^2$ attains its minimum when $\gamma$ is the PRV component of $\delta\otimes \rho_c$, i.e., when $\gamma=\{\delta-\rho_c\}$ (see Corollaries
1 and 2 of Theorem 2.1 in \cite{PRV}). Since $\rho_c\in\frt^*$ can be identified with $\rho\in\frh_0^*$ via \eqref{identifications}, the proof finishes once we recall the spin norm of $\delta$ defined in \eqref{Spin-norm-K-type}.
\end{proof}

\subsection{Distribution of the spin norm along Vogan pencils}

Let $\beta$ be the highest root.
 We call a set of $K$-types
\begin{equation}\label{P-delta}
P(\delta):=\{\delta +n \beta \,|\, n\in\bbN \}
\end{equation}
a \emph{Vogan pencil}. For instance, $P(0)$ denotes the pencil starting from the trivial $K$-type. By Lemma 3.4 and Corollary 3.5 of \cite{Vog81}, the $K$-types occurring in an infinite-dimensional irreducible ($\frg$, $K$)-module $\pi$ consist of certain Vogan pencils.

Put
\begin{equation}\label{P-mu-prime}
P_{\delta}:=\min \{\|\delta +n \beta \|_{\mathrm{spin}}\,|\, n\in\bbN \}.
\end{equation}
Calculating $P_\delta$ will be vital for us in later sections.
Phrased in another way, Theorem 1.1 of \cite{D2} says that
\begin{equation}\label{P-mu}
P_\delta=
\begin{cases}
\min \{\|\delta +n \beta \|_{\mathrm{spin}}\,|\, \delta+n\beta \mbox{ is u-small}\} & \mbox{ if $\delta$ is u-small};\\
\|\delta\|_{\mathrm{spin}} & \mbox{ otherwise.}
\end{cases}
\end{equation}

\section{Proof of Theorem A}
This section aims to prove Theorem A.

\subsection{$s$-families}\label{sec-s-family}
As deduced by Barbasch and Pand\v zi\'c \cite{BP} from
Theorems \ref{thm-Zh} and \ref{thm-HP}, to find all the irreducible unitary representations with nonzero Dirac cohomology,  it suffices to consider the following candidates
 \begin{equation}\label{BP}
J(\lambda, -s\lambda),
\end{equation}
where $s\in W$ is an \emph{involution}, and $2\lambda$ is dominant integral and regular.

For convenience of reader, we repeat part of the explanation from \cite{BP} that the  element $s$ in \eqref{BP} must be an involution. Indeed, for $J(\lambda, -s\lambda)$ to be unitary, it should admit a non-degenerate Hermitian form. Thus by Theorem \ref{thm-Zh}(c), there exists $w\in W$ such that
$$
w(\lambda+s\lambda)=\lambda+s\lambda, \quad w(\lambda-s\lambda)=-\lambda+s\lambda.
$$
Therefore, $w\lambda=s\lambda$ and $ws \lambda=\lambda$. Since $\lambda$ is regular, we must have $w=s$ and $ws=e$. Thus $s^2=e$, i.e., $s$ is an involution.

There are two ways of indexing the representations in \eqref{BP}. On one hand, we can fix $\lambda$, and let $s$ vary. For instance, Barbasch and Pand\v zi\'c fixed $\lambda=\frac{\rho}{2}$ and studied the representations $J(\frac{\rho}{2}, -s\frac{\rho}{2})$ carefully in Section 3 of \cite{BP}. These representations deserve particular attention since they have the smallest possible infinitesimal character.

On the other hand,  one can fix $s$ and let $\lambda$ varies.
Thinking in this way leads us to denote
\begin{equation}\label{Lambda-s}
 \Lambda(s):=\left\{\lambda=[\lambda_1, \dots, \lambda_l]\mid 2\lambda_i\in\mathbb{P} \mbox{ and } \lambda+s\lambda \mbox{ is integral}\right\}.
\end{equation}
We call $\Lambda(s)$ and the corresponding representations $J(\lambda, -s\lambda)$ an \emph{$s$-family}. Note that an $s$-family has infinitely many members. For instance, the $e$-family consists of tempered representations, and they will be handled in Section \ref{sec-several-families}; while on the other extreme, spherical representations live in the $w_0$-family, and they will be considered in Section \ref{sec-spherical}.

\subsection{Involutions and strings}

Fix an involution $s$. Put
\begin{equation}\label{I-s}
 I(s):=\left\{i\in[l]\mid s(\varpi_i)=\varpi_i\right\}.
\end{equation}
It turns out that this set will play an important role in subsequent discussions. Thus let us give an equivalent description for it. In particular, it says that the involution $s$ actually lives in the subgroup $\langle s_j\mid j\in[l]\setminus I(s)\rangle$ of $W$.

\begin{lemma}\label{lemma-s}
Let $s$ be an element of the Weyl group $W$. Then the following are equivalent.
\begin{itemize}
\item[a)] $s(\varpi_i)=\varpi_i$;
\item[b)] the simple reflection $s_i$ does not occur in some reduced expression of $s$;
\item[c)] the simple reflection $s_i$ does not occur in any reduced expression of $s$.
\end{itemize}
\end{lemma}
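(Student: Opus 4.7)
The plan is to prove the cycle $\text{(c)} \Rightarrow \text{(b)} \Rightarrow \text{(a)} \Rightarrow \text{(c)}$. The first two implications are essentially formal; the real content lies in $\text{(a)} \Rightarrow \text{(c)}$, which I would reduce to a standard fact about stabilizers of faces of the fundamental chamber.

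The implication $\text{(c)} \Rightarrow \text{(b)}$ is immediate since every Weyl group element admits at least one reduced expression. For $\text{(b)} \Rightarrow \text{(a)}$, I would use
\[
s_j\varpi_i \;=\; \varpi_i - \langle \varpi_i, \check{\alpha_j}\rangle\,\alpha_j \;=\; \varpi_i - \delta_{ij}\alpha_j,
\]
so that $s_j$ fixes $\varpi_i$ whenever $j\neq i$; if $s=s_{j_1}\cdots s_{j_k}$ is a reduced expression with no $j_m=i$, iterating gives $s\varpi_i=\varpi_i$.

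The main step is $\text{(a)} \Rightarrow \text{(c)}$. Here I would invoke the Chevalley--Steinberg description of stabilizers of points in the closed fundamental chamber: since $\langle \varpi_i, \check{\alpha_j}\rangle = \delta_{ij}$, the weight $\varpi_i$ lies on the face of the fundamental chamber cut out by the walls $\ker\check{\alpha_j}$ with $j\neq i$, and its stabilizer in $W$ is exactly the standard parabolic subgroup
\[
W_{[l]\setminus\{i\}} \;=\; \langle s_j \mid j\in[l]\setminus\{i\}\rangle
\]
(cf.~Humphreys, \emph{Reflection Groups and Coxeter Groups}, Theorem~1.12). Thus (a) forces $s\in W_{[l]\setminus\{i\}}$. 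Now I would appeal to the basic fact that a standard parabolic is itself a Coxeter group whose length function is the restriction of that of $W$: any element $s$ of $W_{[l]\setminus\{i\}}$ has the same length in $W_{[l]\setminus\{i\}}$ as in $W$, and consequently every reduced expression of $s$ in $W$ is already a reduced expression in $W_{[l]\setminus\{i\}}$ and uses only simple reflections $s_j$ with $j\neq i$. This yields (c).

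I do not anticipate a real obstacle here; the only point to be careful about is citing the correct form of the parabolic-stabilizer lemma, so that the reduction to a standard parabolic in the Coxeter-theoretic sense is clean. Once that is in hand, the inheritance of reduced expressions by standard parabolics closes the cycle.
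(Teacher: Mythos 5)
Your proof is correct, and you handle the easy implications the same way the paper does implicitly (the paper notes only (a)$\Rightarrow$(c) is nontrivial). For that main step, however, you take a genuinely different route. The paper argues by contradiction using the explicit telescoping identity
\[
\varpi_i - s(\varpi_i) \;=\; \sum_{k=1}^{n}\langle\varpi_i,\check{\gamma_k}\rangle\, s_{\gamma_1}\cdots s_{\gamma_{k-1}}(\gamma_k),
\]
valid for any reduced expression $s=s_{\gamma_1}\cdots s_{\gamma_n}$ (cited to Lemma~5.5 of \cite{DH11}), observing that each summand is a nonnegative multiple of a positive root and that the first occurrence of $\gamma_k=\alpha_i$ produces a strictly positive contribution; hence $s(\varpi_i)\neq\varpi_i$. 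You instead invoke the classical stabilizer theorem (stabilizers of points in the closed fundamental chamber are the standard parabolics generated by the wall reflections fixing them) together with the inheritance of the length function and reduced expressions by standard parabolic subgroups. Both are sound. The paper's proof is more self-contained and elementary within its own toolkit, reusing a formula it already needs elsewhere (compare Lemmas~\ref{lemma-f-lambda} and~\ref{lemma-g-lambda}), while your proof is shorter and more conceptual, at the cost of appealing to two standard but external Coxeter-theoretic results.
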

\begin{proof}
The only non-trivial step is to show
that (a) implies (c). Let $s=s_{\gamma_{1}}s_{\gamma_{2}}\cdots s_{\gamma_{n}}
$ be any reduced decomposition of $s$ into simple root reflections, and suppose that $j$ is the smallest index such that $\gamma_j=\alpha_i$. Then, by Lemma 5.5 of \cite{DH11},
$$\varpi_i-
s(\varpi_i)=\sum_{k=1}^{n} \langle
\varpi_i, \check{\gamma_{k}}\rangle\,
s_{\gamma_1}s_{\gamma_2}\cdots s_{\gamma_{k-1}}(\gamma_{k}),$$
where $s_{\gamma_1}s_{\gamma_2}\cdots s_{\gamma_{k-1}}(\gamma_{k})$
is a positive root for each $k$. In particular, when $k=j$, the term on the RHS is nonzero. Thus $s(\varpi_i)\neq\varpi_i$, contradiction.
\end{proof}

Let $I$ be a \emph{non-empty} subset of $[l]$. Recall the $s$-family $\Lambda(s)$ from \eqref{Lambda-s}. We call
\begin{equation}\label{I-string}
 \{\lambda\in\Lambda(s)\mid \lambda_i \mbox{ varies for } i\in I \mbox{ and } \lambda_j \mbox{ is fixed for } j\in[l]\setminus I\}
\end{equation}
and the corresponding representations $J(\lambda, -s\lambda)$ an \emph{$(s, I)$-string}.
When $s$ is known from the context, we may call it an $I$-string or just a string. Since the non-emptyness of $I$ is built into the definition, an $(s, I)$-string also contains infinitely many members.

\subsection{An analysis of Parthasatharathy's Dirac inequality}
Fix an involution $s$, and let $\lambda=\sum_{i=1}^l \lambda_i\varpi_i\in\Lambda(s)$.
Put $\mu=\{\lambda+s\lambda\}$. Then
\begin{align*}
\Delta_1(\lambda) :&=\|2\lambda\|^2-\|\mu\|_{\mathrm{spin}}^2\\
&=\|\lambda-s\lambda\|^2+\|\lambda+s\lambda\|^2 -\|\mu\|_{\mathrm{spin}}^2\\
&=\|\lambda-s\lambda\|^2+\|\mu\|^2 -\|\mu\|_{\mathrm{spin}}^2\\
&=\|\lambda-s\lambda\|^2+\|\mu-\rho+\rho\|^2 -\|\{\mu-\rho\}+\rho\|^2\\
&=\|\lambda-s\lambda\|^2-2\langle\{\mu-\rho\}-(\mu-\rho), \rho \rangle.
\end{align*}
Therefore, to understand $\Delta_1(\lambda)$, we should pay attention to $\|\lambda-s\lambda\|^2$ and the way that $\mu-\rho$ is conjugated to the dominant Weyl chamber. For convenience, we set
\begin{equation}\label{f-lambda}
f(\lambda):=\|\lambda-s\lambda\|^2,
\end{equation}
and
\begin{equation}\label{g-lambda}
g(\lambda):=2\langle\{\mu-\rho\}-(\mu-\rho), \rho \rangle.
\end{equation}
Then as deduced above
\begin{equation}\label{Delta1-f-g}
\Delta_1(\lambda)=f(\lambda)-g(\lambda).
\end{equation}

\begin{lemma}\label{lemma-f-lambda}
Fix an involution $s\in W$ such that $I(s)$ is empty.
The function $f(\lambda)$ is a homogeneous quadratic polynomial in terms of $\lambda_i$, $1\leq i\leq l$. Moreover, each term $\lambda_i^2$ has a positive coefficient, while each term $\lambda_{i} \lambda_{j}$, where $i\neq j$, has a nonnegative coefficient.
\end{lemma}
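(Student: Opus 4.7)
The plan is to reduce the computation of $f(\lambda)$ to a sum of squares by exploiting the structure of an involution in a Weyl group. Specifically, I would invoke the classical fact (Carter, Richardson) that any involution $s\in W$ admits an expression $s = s_{\beta_1}\cdots s_{\beta_m}$ as a product of reflections along pairwise orthogonal positive roots $\beta_1,\dots,\beta_m$. Because these reflections then commute and each $s_{\beta_i}$ fixes $\beta_k$ for $k\neq i$, a short induction on $m$ gives
\[
s\lambda = \lambda - \sum_{i=1}^{m}\langle\lambda,\check{\beta_i}\rangle\,\beta_i,
\]
so that $\lambda - s\lambda = \sum_{i=1}^{m}\langle\lambda,\check{\beta_i}\rangle\,\beta_i$. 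Pairwise orthogonality of the $\beta_i$'s then yields
\[
f(\lambda) = \|\lambda-s\lambda\|^2 = \sum_{i=1}^{m}\|\beta_i\|^2\,\langle\lambda,\check{\beta_i}\rangle^2.
\]

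Next, writing $\lambda = \sum_{j=1}^{l}\lambda_j\varpi_j$, each linear form reads
\[
\langle\lambda,\check{\beta_i}\rangle = \sum_{j=1}^{l}\lambda_j\,\langle\varpi_j,\check{\beta_i}\rangle.
\]
Since $\beta_i$ is a positive root, it is a nonnegative integer combination of the simple roots; combined with $\langle\varpi_j,\check{\alpha_k}\rangle = \delta_{jk}$, this forces every coefficient $\langle\varpi_j,\check{\beta_i}\rangle$ to be nonnegative (in fact a positive rational multiple of the multiplicity of $\alpha_j$ in $\beta_i$). Squaring each linear form and summing therefore displays $f(\lambda)$ as a homogeneous quadratic polynomial in $\lambda_1,\dots,\lambda_l$ in which every monomial carries a nonnegative coefficient; this immediately settles the assertion about the cross terms $\lambda_i\lambda_j$.

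To finish, I need to show that the coefficient of $\lambda_j^2$, namely $\sum_{i=1}^{m}\|\beta_i\|^2\,\langle\varpi_j,\check{\beta_i}\rangle^2$, is strictly positive for every $j\in[l]$. If this sum vanished for some $j$, then $\langle\varpi_j,\check{\beta_i}\rangle = 0$ for all $i$, and the formula for $s\lambda$ above would give $s\varpi_j = \varpi_j$; but then $j\in I(s)$ by \eqref{I-s}, contradicting the standing hypothesis $I(s)=\emptyset$. The only nontrivial ingredient, and therefore the main obstacle, is the orthogonal decomposition of the involution $s$; once that classical structural fact is in hand, the remainder of the proof is the direct book-keeping sketched above.
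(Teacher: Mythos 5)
Your proof is correct, and it follows a genuinely different route from the paper's. The paper takes an arbitrary reduced expression $s = s_{\gamma_1}\cdots s_{\gamma_n}$ and invokes Lemma 5.5 of \cite{DH11} to get $\lambda - s\lambda = \sum_k \langle\lambda,\check{\gamma_k}\rangle\,s_{\gamma_1}\cdots s_{\gamma_{k-1}}(\gamma_k)$ as an $\bbN$-combination of positive roots, writes this as $\sum_i \mu_i\alpha_i$ with $\mu_i = \sum_j a_{ij}\lambda_j$ and $a_{ij}\ge 0$, uses $I(s)=\emptyset$ together with Lemma \ref{lemma-s} to see $a_{ii}>0$, and then computes $\|\lambda-s\lambda\|^2 = 2\langle\lambda,\lambda-s\lambda\rangle = \sum_i \|\alpha_i\|^2\lambda_i\mu_i$. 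You instead invoke the Carter--Richardson theorem that an involution is a product of reflections along pairwise orthogonal positive roots $\beta_1,\dots,\beta_m$, which immediately yields the sum-of-squares identity $f(\lambda) = \sum_i \|\beta_i\|^2\langle\lambda,\check{\beta_i}\rangle^2$; the nonnegativity of the cross terms and the strict positivity of the diagonal terms (via $I(s)=\emptyset$) then drop out directly. Your decomposition is more structural and makes the positive-semidefiniteness of $f$ transparent as a sum of squares, at the cost of invoking a deeper classical fact about involutions; the paper's argument is more elementary (only reduced expressions and a known root-sum lemma) but the resulting expression for $f(\lambda)$ is less symmetric. Both arguments hinge on the same two inputs — expressing $\lambda - s\lambda$ as a nonnegative-root combination with coefficients linear in the $\lambda_j$, and using $I(s)=\emptyset$ to force the diagonal contributions to be strictly positive — so they are in the same spirit even though the decomposition of $s$ is different.
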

\begin{proof}
Let $s=s_{\gamma_{1}}s_{\gamma_{2}}\cdots s_{\gamma_{n}}
$ be any reduced decomposition of $s$ into simple root reflections.
Again, by Lemma 5.5 of \cite{DH11},
\begin{equation}\label{lambda-s-lambda}
\lambda-
s \lambda=\sum_{k=1}^{n} \langle
\lambda, \check{\gamma_{k}}\rangle\,
s_{\gamma_1}s_{\gamma_2}\cdots s_{\gamma_{k-1}}(\gamma_{k}),
\end{equation}
where $s_{\gamma_1}s_{\gamma_2}\cdots s_{\gamma_{k-1}}(\gamma_{k})$
is a positive root for each $k$. Thus if we set
$$
\lambda-s\lambda=\sum_{i=1}^{l} \mu_i \alpha_i,
$$
where $\mu_i=\sum_{j=1}^{l}a_{ij} \lambda_j$, then each coefficient $a_{ij}$ is a nonnegative integer. Since the set $I(s)$ is empty by assumption, according to Lemma \ref{lemma-s}, the simple root $\alpha_i$ occurs at least once in the multi-set $\{\gamma_1, \gamma_2, \dots, \gamma_n\}$. Let $k$ be the smallest index such that $\gamma_k=\alpha_i$. Then the $k$-th term of the RHS of \eqref{lambda-s-lambda} is simply
$$
s_{\gamma_1}s_{\gamma_2}\cdots s_{\gamma_{k-1}}(\alpha_i).
$$
This term will contribute $\alpha_i$ to $\lambda-s\lambda$. Thus we actually have $a_{ii}>0$.

To sum up, we have
\begin{align*}
\|\lambda-s\lambda\|^2 &=\| \lambda\|^2+\| s\lambda\|^2-2\langle \lambda, s\lambda\rangle\\
&=2\langle \lambda, \lambda-s\lambda\rangle\\
&=2\langle \sum_i\lambda_i \varpi_i, \sum_i \mu_i \alpha_i\rangle\\
&=\sum_{i} \|\alpha_i\|^2  \lambda_i \mu_i,
\end{align*}
where we use $\langle \varpi_i, \check{\alpha_j}\rangle=\delta_{ij}$ and $\alpha_i=\frac{\|\alpha_i\|^2}{2}\check{\alpha_i}$ at the penultimate step. Now the desired result follows  since $\mu_i=\sum_{j}a_{ij} \lambda_j$.
\end{proof}

\begin{lemma}\label{lemma-g-lambda}
Fix an involution $s\in W$.
The function $g(\lambda)$ is bounded over $\Lambda(s)$.
\end{lemma}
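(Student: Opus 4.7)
The plan is to bound $F(\mu):=\{\mu-\rho\}-(\mu-\rho)$, where $\mu:=\{\lambda+s\lambda\}$, uniformly as $\lambda$ ranges through $\Lambda(s)$; once this is established, $g(\lambda)=2\langle F(\mu),\rho\rangle$ is automatically bounded. In fact I will prove the stronger statement that $F(\mu)$ assumes only finitely many values as $\lambda$ varies. Note that $\mu$ is a dominant integral weight, since $\lambda+s\lambda$ is integral by the definition of $\Lambda(s)$ and the Weyl group preserves the integral weight lattice.

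First, I claim that $F(\mu)$ is a nonnegative integer combination of simple roots, say $F(\mu)=\sum_{i} c_i \alpha_i$ with $c_i\in\bbN$. Indeed, one can travel from $\mu-\rho$ to its dominant conjugate $\{\mu-\rho\}$ by a finite sequence of simple reflections of the form $\nu\mapsto s_i\nu=\nu-\langle\nu,\check{\alpha_i}\rangle\alpha_i$, where at each step $i$ is chosen so that $\langle\nu,\check{\alpha_i}\rangle<0$; each such step adds the nonnegative integer multiple $-\langle\nu,\check{\alpha_i}\rangle\alpha_i$ of a simple root to the current weight, and integrality of the intermediate weights is preserved. Summing contributions over the algorithm yields the claim.

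Next, pick $w\in W$ realizing $w(\mu-\rho)=\{\mu-\rho\}$. Then
\[ F(\mu)=w(\mu-\rho)-(\mu-\rho)=(\rho-w\rho)-(\mu-w\mu). \]
Since $\mu$ and $\rho$ are both dominant, the differences $\mu-w\mu$ and $\rho-w\rho$ both lie in $\sum_{i}\bbN\alpha_i$; write $\rho-w\rho=\sum_{i}b_i(w)\alpha_i$ and $\mu-w\mu=\sum_{i}a_i\alpha_i$ with $a_i,b_i(w)\in\bbN$. Comparing with $F(\mu)=\sum_{i}c_i\alpha_i$ gives $c_i=b_i(w)-a_i$, and the nonnegativity $c_i\geq 0$ established in the previous paragraph forces $a_i\leq b_i(w)$, hence $0\leq c_i\leq b_i(w)$.

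Finally, since $W$ is a finite group, the integers $b_i(w)$ are uniformly bounded by $B_i:=\max_{w'\in W}b_i(w')<\infty$. Therefore each coefficient $c_i$ is constrained to the finite set $\{0,1,\dots,B_i\}$, so $F(\mu)$ takes only finitely many values as $\mu$ varies; consequently $g(\lambda)=2\langle F(\mu),\rho\rangle$ is bounded on $\Lambda(s)$. The only slightly delicate point is justifying the nonnegativity claim $F(\mu)\in\sum_{i}\bbN\alpha_i$, but this is a straightforward induction along the conjugation-to-dominant algorithm combined with integrality of $\mu$, so I do not expect any real obstacle.
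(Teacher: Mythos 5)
Your proof is correct, and it takes a genuinely different route from the paper. The paper picks $w\in W$ with $w(\mu-\rho)=\{\mu-\rho\}$ and then invokes a telescoping identity (a variant of Lemma~5.5 of \cite{DH11}) to write $\{\mu-\rho\}-(\mu-\rho)$ as a sum $\sum_{k}\langle \rho-\mu,\check{\gamma_k}\rangle\,s_{\gamma_1}\cdots s_{\gamma_{k-1}}(\gamma_k)$ over a reduced word for $w$, then uses dominance of $\mu$ to bound each coefficient by $1$ and gets the explicit bound $g(\lambda)\leq\|2\rho\|^2$. You instead split $F(\mu)=w(\mu-\rho)-(\mu-\rho)$ as $(\rho-w\rho)-(\mu-w\mu)$, observe that both pieces and $F(\mu)$ itself lie in $\sum_i\bbN\alpha_i$ (by dominance and integrality), and sandwich the coefficients of $F(\mu)$ between $0$ and those of $\rho-w\rho$, which are uniformly bounded because $W$ is finite. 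Your argument is more elementary, avoiding the DH11 lemma entirely, and it yields a slightly stronger conclusion---namely that the correction term $F(\mu)$ takes only finitely many values over $\Lambda(s)$, not merely that $g$ is bounded---though the paper's version has the advantage of producing the clean explicit bound $\|2\rho\|^2$ directly.
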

\begin{proof}
Since $\{\mu-\rho\}-(\mu-\rho)$ is an $\bbN$-combination of simple roots, it follows that $g(\lambda)$ is  bounded below by $0$. On the other hand, let $w\in W$ be such that $\{\mu-\rho\}=w(\mu-\rho)$, and let $w=s_{\gamma_1}\cdots s_{\gamma_n}$ be a reduced expression. Similar to Lemma 5.5 of \cite{DH11}, we have
\begin{align*}
\{\mu-\rho\}-(\mu-\rho)&=w(\mu-\rho)-(\mu-\rho)\\
&=\sum_{k=1}^{n} \langle
\rho-\mu, \check{\gamma_{k}}\rangle\,
s_{\gamma_1}s_{\gamma_2}\cdots s_{\gamma_{k-1}}(\gamma_{k}),
\end{align*}
where $s_{\gamma_1}s_{\gamma_2}\cdots s_{\gamma_{k-1}}(\gamma_{k})$
is a positive root for each $k$. Since $\mu$ is dominant, we have $$\langle
\rho-\mu, \check{\gamma_{k}}\rangle\leq 1, \quad 1\leq k \leq n.$$
Moreover, we have
$$
\Phi(w^{-1})=\left\{s_{\gamma_1}s_{\gamma_2}\cdots s_{\gamma_{k-1}}(\gamma_{k})\mid 1\leq k\leq n\right\}.
$$
Here $\Phi(w^{-1}):=\{\alpha\in\Delta^+\mid w^{-1}(\alpha)\in\Delta^-\}$. Therefore, to sum up,
$$
0\leq g(\lambda)\leq 2 \sum_{\alpha\in\Phi(w^{-1})}\langle\rho, \alpha \rangle\leq \|2\rho\|^2.
$$
\end{proof}

\subsection{Proof of Theorem A}
\begin{prop}\label{prop-I(s)-empty}
Fix an involution $s\in W$ such that $I(s)$ is empty.
The there are at most finitely many unitary representations in the $s$-family.
\end{prop}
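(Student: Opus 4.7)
The plan is to rule out unitarity for all but finitely many $\lambda\in\Lambda(s)$ via Parthasarathy's Dirac inequality, using the estimate $\Delta_1(\lambda)=f(\lambda)-g(\lambda)$ that has just been derived.

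First, I would observe that for a candidate $J(\lambda,-s\lambda)$ in the $s$-family, the extremal weight $\lambda-(-s\lambda)=\lambda+s\lambda$ guarantees, by Theorem \ref{thm-Zh}, that the $K$-type $\mu=\{\lambda+s\lambda\}$ occurs in $J(\lambda,-s\lambda)$ with multiplicity one. If $J(\lambda,-s\lambda)$ were unitary, then Proposition \ref{prop-D-spin-lowest}(b) (Parthasarathy's inequality) applied to this $K$-type would force $\|\mu\|_{\mathrm{spin}}^2\ge \|2\lambda\|^2$, the norm squared of the infinitesimal character in the paper's conventions. Equivalently, $\Delta_1(\lambda)\le 0$. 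So it suffices to show that $\Delta_1(\lambda)>0$ for all but finitely many $\lambda\in\Lambda(s)$.

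Second, I would exploit Lemmas \ref{lemma-f-lambda} and \ref{lemma-g-lambda}. Because $I(s)=\emptyset$, Lemma \ref{lemma-f-lambda} guarantees that $f(\lambda)=\sum_{i,j}c_{ij}\lambda_i\lambda_j$ where every diagonal coefficient $c_{ii}$ is strictly positive and every off-diagonal coefficient is nonnegative. Since every $\lambda\in\Lambda(s)$ has $\lambda_i\ge 1/2>0$, the cross terms contribute nonnegatively, so
\[
f(\lambda)\ \ge\ \sum_{i=1}^l c_{ii}\lambda_i^2\ \ge\ c\,\|\lambda\|_\infty^2,
\]
where $c=\min_i c_{ii}>0$ depends only on $s$. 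On the other hand, Lemma \ref{lemma-g-lambda} bounds $g(\lambda)$ uniformly by $\|2\rho\|^2$. Combining these,
\[
\Delta_1(\lambda)\ \ge\ c\,\|\lambda\|_\infty^2-\|2\rho\|^2,
\]
which is strictly positive as soon as $\|\lambda\|_\infty^2>\|2\rho\|^2/c$.

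Third, I would conclude by finiteness of lattice points in a bounded region. Since $\lambda\in\Lambda(s)$ has coordinates in $\tfrac12\mathbb{P}$, the subset $\{\lambda\in\Lambda(s):\|\lambda\|_\infty^2\le \|2\rho\|^2/c\}$ is finite. Every $\lambda\in\Lambda(s)$ outside this bounded region gives $\Delta_1(\lambda)>0$ and hence a non-unitary $J(\lambda,-s\lambda)$; therefore the $s$-family contains at most finitely many unitary representations. The only subtlety I anticipate is the bookkeeping of the factor-of-two identifications between $\frt^*$ and $\frh_0^*$ when asserting $\|\Lambda\|^2=\|2\lambda\|^2$, but this is already built into the derivation of $\Delta_1$ and needs no extra work; the core of the argument is purely the positive-definiteness (on $\mathbb{R}_{>0}^l$) of $f$ coming from $I(s)=\emptyset$.
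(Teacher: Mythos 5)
Your proposal is correct and follows the same strategy as the paper's own (much terser) proof: use Parthasarathy's Dirac inequality to reduce unitarity to $\Delta_1(\lambda)\le 0$, then combine the coercivity of $f$ coming from Lemma \ref{lemma-f-lambda} (here the hypothesis $I(s)=\emptyset$ is used exactly to get $c_{ii}>0$) with the uniform bound on $g$ from Lemma \ref{lemma-g-lambda} to conclude $\Delta_1(\lambda)>0$ outside a bounded region of $\Lambda(s)$. The paper simply states that these two lemmas imply $\Delta_1(\lambda)\le 0$ fails for all but finitely many $\lambda$; you have filled in the elementary quadratic-growth estimate $f(\lambda)\ge c\,\|\lambda\|_\infty^2$ and the lattice-point finiteness argument that makes this precise, which is a useful explicit rendering of what the paper leaves implicit.
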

\begin{proof}
By Dirac inequality, the representation $J(\lambda, -s\lambda)$ is non-unitary whenever
$\Delta_1(\lambda)>0$.
Now by \eqref{Delta1-f-g}, Lemmas \ref{lemma-f-lambda} and \ref{lemma-g-lambda}, $\Delta_1(\lambda)\leq 0$ holds for at most finitely many points in $\Lambda(s)$. The result follows.
\end{proof}

\begin{prop}\label{prop-I(s)-non-empty}
Fix an involution $s\in W$ such that $I(s)$ is non-empty.
Then  this $s$-family either contains no representations in $\widehat{G}^{\mathrm{d}}$, or it contains finitely many $I(s)$-strings of representations in $\widehat{G}^{\mathrm{d}}$. In the latter case, the strings  are all cohomologically induced from modules of $\widehat{L}_s^{\mathrm{d}}$ sitting in the $s$-family of $L_s$, and they are all in the good range. Here $L_s$ is the  Levi factor of the $\theta$-stable parabolic subgroup $P_s$ of $G$ corresponding to the simple roots $\{\alpha_i\mid i\notin I(s)\}$.
\end{prop}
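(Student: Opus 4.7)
The plan is to use Lemma \ref{lemma-s} to realize $s$ as an element of the Weyl subgroup $W_{\frl_s} := \langle s_j \mid j \in [l] \setminus I(s) \rangle$, which is exactly the Weyl group of the Levi $\frl_s$ specified in the proposition, and then to match members of the $s$-family whose $I(s)$-coordinates are large with cohomologically induced modules from $L_s$ in the good range. Note that since $\frq_s = \frl_s \oplus \fru_s$ is built from the simple roots outside $I(s)$, every root $\alpha \in \Delta(\fru_s, \frh)$ has a strictly positive coefficient on some $\alpha_i$ with $i \in I(s)$.

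First I would decompose any $\lambda \in \Lambda(s)$ as $\lambda = \lambda' + \lambda''$ with $\lambda' = \sum_{i\in I(s)} \lambda_i \varpi_i$ and $\lambda'' = \sum_{j \notin I(s)} \lambda_j \varpi_j$. Since $s$ fixes each $\varpi_i$ for $i\in I(s)$, we have $s\lambda = \lambda' + s\lambda''$, and $\lambda - s\lambda = \lambda'' - s\lambda''$ is independent of $\lambda'$. Meanwhile $\langle \lambda', \alpha \rangle > 0$ for every $\alpha \in \Delta(\fru_s, \frh)$, so making all $\lambda_i$ with $i\in I(s)$ large enough forces \eqref{def-good} to hold for the pair $((\lambda, -s\lambda), \fru_s)$.

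In that good-range regime, $J(\lambda, -s\lambda) \cong \caL_S(Z)$ for an irreducible $(\frl_s, L_s \cap K)$-module $Z$ whose Zhelobenko parameters inside $L_s$ differ from $(\lambda, -s\lambda)$ by the standard $\rho(\fru_s)$-shift. Since $s \in W_{\frl_s}$, the module $Z$ sits in the $s$-family of $L_s$, and varying $\lambda'$ exactly corresponds to tensoring $Z$ with a one-dimensional unitary character of $L_s$ that is trivial on its semisimple factor. Theorems \ref{thm-Vogan} and \ref{thm-D} then propagate unitarity and non-vanishing Dirac cohomology back and forth between $Z$ and $\caL_S(Z)$. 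The crucial observation is that, viewed intrinsically inside $L_s$, the index set $I_{L_s}(s)$ computed with $L_s$'s own fundamental weights is empty, so Proposition \ref{prop-I(s)-empty} applied to $L_s$ produces only finitely many candidate seeds $Z \in \widehat{L}_s^{\mathrm{d}}$ in the $s$-family of $L_s$.

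Each such seed $Z$ therefore yields one $(s, I(s))$-string of $\widehat{G}^{\mathrm{d}}$, and there are only finitely many seeds, hence finitely many strings; the (finitely many) $\lambda$'s whose $\lambda'$ falls below the good-range threshold contribute at most finitely many additional modules, which are absorbed into the scattered part rather than disturbing the string structure. The main obstacle I foresee is the Zhelobenko bookkeeping between the $G$-parameters $(\lambda, -s\lambda)$ and the Levi parameters of $Z$---ensuring that the $\rho(\fru_s)$-shift, the dominance adjustments, and the matching of infinitesimal characters all interact cleanly with the $s$-family decomposition---together with making the good-range threshold uniform in $\lambda''$ so that each seed generates exactly one string.
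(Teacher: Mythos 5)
Your structural plan matches the paper closely: realize $s$ inside $W_{L_s}$ via Lemma \ref{lemma-s}, identify the relevant modules as cohomologically induced from $L_s$, observe that $I(s)$ becomes empty when computed intrinsically in $L_s$, and invoke Proposition \ref{prop-I(s)-empty} there to get finitely many seeds. That part is sound and is essentially the paper's argument.

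However, there is a genuine gap in your handling of the good range. You assert that the good range condition \eqref{def-good} holds only after ``making all $\lambda_i$ with $i\in I(s)$ large enough,'' and you then hedge by saying the finitely many $\lambda$'s below that threshold are ``absorbed into the scattered part rather than disturbing the string structure.'' This is incorrect, and it matters. The paper's key observation is that the good range condition is \emph{automatic} for every $\lambda \in \Lambda(s)$: since $2\lambda$ is dominant integral and regular, one has $\langle (\lambda, -\lambda), \alpha\rangle > 0$ for all $\alpha \in \Delta(\fru_s)$, and this already forces \eqref{def-good} (the $\rho(\fru_s)$-shift only helps). No threshold on $\lambda'$ is needed. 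Without this, your argument does not show that the \emph{entire} $I(s)$-string through a given $\lambda_0$ lies in $\widehat{G}^{\mathrm{d}}$, which is precisely what the proposition asserts (it contains ``finitely many $I(s)$-strings,'' with no extra scattered leftovers). Your ``absorbed into the scattered part'' step is not a proof; it quietly changes the conclusion to a weaker and in fact false one, and it would also spoil the clean dichotomy claimed in Theorem A between the scattered and string parts.

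Concretely, the fix is to drop the threshold entirely: once you note that regular dominance of $\lambda$ already yields the good range for $(\frq_s, \fru_s)$, Theorem \ref{thm-Vogan} gives $J(\lambda, -s\lambda) \cong \caL_S(Z_\lambda)$ for \emph{every} $\lambda$ in the family, Theorem \ref{thm-D} transfers nonvanishing of Dirac cohomology both ways, and the identity $s(\lambda - \lambda_0) = \lambda - \lambda_0$ for $\lambda - \lambda_0$ supported on $I(s)$ shows $Z_\lambda$ and $Z_{\lambda_0}$ differ by a central unitary character of $L_s$, so membership of $J(\lambda_0,-s\lambda_0)$ in $\widehat{G}^{\mathrm{d}}$ propagates to the whole $I(s)$-string. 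The finiteness of the number of strings then comes exactly as you say, by passing to $L_s$ where $I(s)$ is empty.
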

\begin{proof}
Let $\frp_s=\frl_s+\fru_s$ be the Levi decomposition of the complexified Lie algebra of $P_s$. Put $S:=\text{dim}\,(\fru_s\cap\frk)$.
Suppose that this $s$-family contains members of $\widehat{G}^{\mathrm{d}}$. Take such a representation $J(\lambda_0, -s\lambda_0)$ arbitrarily. Since $2\lambda_0$ is dominant integral and regular, we have
\begin{equation}
\langle  (\lambda_0, -\lambda_0), \alpha\rangle>0, \quad \forall \alpha \in \Delta(\fru_s).
\end{equation}
Thus the good range condition is met and by Theorem \ref{thm-Vogan},
$$
J(\lambda_0, -s\lambda_0)\cong \caL_S(Z_{\lambda_0}),
$$
where $Z_{\lambda_0}$ is the irreducible unitary representation of $L_s$ with Zhelobenko parameters
$$
(\lambda_0-\frac{\rho(\fru_s)}{2}, -s(\lambda_0-\frac{\rho(\fru_s)}{2}))
=(\lambda_0-\frac{\rho(\fru_s)}{2}, -s\lambda_0+\frac{\rho(\fru_s)}{2}).
$$
Namely, $Z_{\lambda_0}$ has $T$-parameter $\lambda_0+s\lambda_0-\rho(\fru_s)$ and $A$-parameter $\lambda_0-s\lambda_0$, respectively. Moreover, by Theorem \ref{thm-D}, the inducing module $Z_{\lambda_0}$  has nonzero Dirac cohomology.

Now take any $\lambda$ in the $I(s)$-string where $\lambda_0$ sits in, i.e.,
$$\lambda-\lambda_0=\sum_{i\in I(s)}a_i \varpi_i,$$
where $2a_i\in\bbZ$. Recalling the definition of $I(s)$ in \eqref{I-s} leads to \begin{equation}\label{lambda-s-lambda0}
s(\lambda-\lambda_0)=\lambda-\lambda_0.
\end{equation}
This time the good range condition is also met since
\begin{equation}
\langle  (\lambda, -\lambda), \alpha\rangle>0, \quad \forall \alpha \in \Delta(\fru_s).
\end{equation}
Thus by Theorem \ref{thm-Vogan},
$$
J(\lambda, -s\lambda)\cong \caL_S(Z_{\lambda}),
$$
where $Z_{\lambda}$ is the irreducible representation of $L_s$ with $T$-parameter $\lambda+s\lambda-\rho(\fru_s)$ and $A$-parameter $\lambda-s\lambda$.

By \eqref{lambda-s-lambda0}, we have
$$
(\lambda+s\lambda-\rho(\fru_s))-(\lambda_0+s\lambda_0-\rho(\fru_s))
=2(\lambda-\lambda_0)=\sum_{i\in I(s)} 2 a_i \varpi_i.
$$
and
$$
\lambda-s\lambda=\lambda_0-s\lambda_0.
$$
Hence $Z_{\lambda}$ and $Z_{\lambda_0}$ differ from each other by an integral central unitary character. Therefore, $Z_{\lambda}$ is also unitary and it has nonzero Dirac cohomology as well. By Theorems \ref{thm-Vogan} and \ref{thm-D}, $J(\lambda, -s\lambda)$ is a member of $\widehat{G}^{\mathrm{d}}$. We conclude that all elements of the $I(s)$-string containing $\lambda_0$ belongs to $\widehat{G}^{\mathrm{d}}$, and they are all in the good range.

To prove that there exist at most  finitely many such $I(s)$-strings, it suffices to work on the $s$-family of $L_s$. Then on the $L_s$ level, the set $I(s)$ becomes empty, and Proposition \ref{prop-I(s)-empty} applies.
\end{proof}

 By the proof of Proposition \ref{prop-I(s)-non-empty}, the string part of $\widehat{G}^{\mathrm{d}}$ comes from the scattered parts of $\widehat{L}_\mathrm{ss}^{\mathrm{d}}$.  Here $L\supseteq HA$ runs over the proper $\theta$-stable Levi subgroups of $G$, and $L_{\mathrm{ss}}$ denotes the semisimple part of $L$. Theorem A now follows from Propositions \ref{prop-I(s)-empty} and \ref{prop-I(s)-non-empty}.

\section{Certain families of representations}\label{sec-several-families}

This section aims to study the Dirac cohomology of tempered representations, minimal representations and model representations.
\subsection{Tempered representations}

\begin{prop}\label{prop-tempered}
Let $G$ be a connected complex Lie group. Then the tempered representations with nonzero Dirac cohomology are precisely $J(\lambda, -\lambda)$, where $2\lambda$ is dominant integral and regular.
\end{prop}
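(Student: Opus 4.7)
The plan is to combine the Barbasch--Pand\v zi\'c reduction \eqref{BP} with the temperedness criterion in Theorem \ref{thm-Zh}(d), and then use Proposition \ref{prop-D-spin-lowest} for the converse.

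First, by the reduction recalled in Section \ref{sec-s-family}, any $\pi\in\widehat{G}^{\mathrm{d}}$ has the form $J(\lambda,-s\lambda)$ with $s\in W$ an involution and $2\lambda$ dominant integral regular. By Theorem \ref{thm-Zh}(d), such a $\pi$ is tempered if and only if the $A$-parameter $\lambda-s\lambda$ lies in $i\frh_0^\ast$. Since $\lambda\in\frh_0^\ast$ is real, this forces $\lambda-s\lambda=0$, i.e. $s\lambda=\lambda$. Regularity of $\lambda$ (which follows from $2\lambda$ being dominant integral regular) then forces $s=e$. So the only candidates are $J(\lambda,-\lambda)$, as claimed.

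Conversely, I need to show every such $J(\lambda,-\lambda)$ actually has nonzero Dirac cohomology. These representations are unitary (the $A$-parameter is zero, so $X(\lambda,-\lambda)=J(\lambda,-\lambda)$ is a unitary principal series) and tempered, again by Theorem \ref{thm-Zh}(d). By Zhelobenko's Theorem \ref{thm-Zh}, the lowest $K$-type $\delta$ of $J(\lambda,-\lambda)$ has extremal weight $\lambda-(-\lambda)=2\lambda$, which is dominant integral and regular; thus $\delta=2\lambda$. By \eqref{spin-lambda} applied to this regular weight, $\|\delta\|_{\mathrm{spin}}=\|\delta\|=\|2\lambda\|$. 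Under the identifications \eqref{identifications}, the infinitesimal character $(\lambda,-\lambda)$ corresponds to $2\lambda\in\frh_0^\ast$, so $\|\Lambda\|=\|2\lambda\|$ as well. Hence $\|\delta\|_{\mathrm{spin}}=\|\Lambda\|$, and Proposition \ref{prop-D-spin-lowest}(b) implies that $\delta$ contributes to $H_D(J(\lambda,-\lambda))$. In particular $H_D(J(\lambda,-\lambda))\neq 0$.

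The main point requiring care is the bookkeeping between norms on $\frh$ and on $\frh_0^\ast$ coming from \eqref{identifications}, to confirm the equality $\|\Lambda\|=\|2\lambda\|$ used in the converse step; once this is pinned down, both implications are immediate from the quoted results, and no further input is needed.
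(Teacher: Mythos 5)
Your proposal is correct and follows essentially the same route as the paper's proof: reduce to $J(\lambda,-\lambda)$ via the Barbasch--Pand\v zi\'c shape \eqref{BP} together with Zhelobenko's temperedness criterion (you invoke part (d) explicitly, which is the right citation; the paper writes (c), apparently a slip), and then detect nonzero Dirac cohomology via Proposition~\ref{prop-D-spin-lowest}. The only cosmetic difference is in the converse: the paper uses Frobenius reciprocity to show \emph{every} $K$-type $\delta$ of $J(\lambda,-\lambda)$ satisfies $\|\delta\|_{\mathrm{spin}}\geq\|2\lambda\|$ (thereby also identifying $2\lambda$ as the unique spin-lowest $K$-type), whereas you check only the lowest $K$-type $2\lambda$ and apply part~(b) of the proposition; both suffice for the stated result.
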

\begin{proof}
By \eqref{BP} and Theorem \ref{thm-Zh}(c), the claim boils down to considering $J(\lambda, -\lambda)$, where $2\lambda$ is dominant integral and regular. Take any $K$-type $\delta$ in $J(\lambda, -\lambda)$. By Frobenius reciprocity, we have that $\delta-2\lambda $ is a positive integer combination of certain positive roots. Thus by \eqref{spin-lambda}, we have
$$
\|\delta\|_{\mathrm{spin}}\geq  \|\delta\|\geq \|2\lambda\|=\|2\lambda\|_{\mathrm{spin}}.
$$
This shows that the spin norm of $J(\lambda, -\lambda)$ is $\|2\lambda\|$, and it is achieved only on the lowest $K$-type $2\lambda$. Since $2\lambda$ is also the infinitesimal character of $J(\lambda, -\lambda)$,  it follows from Proposition \ref{prop-D-spin-lowest} that this representation  has nonzero Dirac cohomology.
\end{proof}

Tempered representations with nonvanishing Dirac cohomology have been classified in \cite{DH2} for real reductive Lie groups in Harish-Chandra class. Theorem 1.2 of \cite{DD} says that by taking the unique lowest $K$-type, these representations are in bijection with those $K$-types whose spin norm equal to their lambda norm. By the above proposition, we can  write down this bijection explicitly for complex Lie groups:
\begin{equation}
J(\lambda, -\lambda)\longleftrightarrow 2\lambda.
\end{equation}

\subsection{Minimal representations}
The minimal representations $\pi_{\mathrm{min}}$ are those attached to the minimal nilpotent coadjiont orbits of $\frg$. By \cite{Vog81}, they are ladder representations. Namely, their $K$-types are multiplicity-free and form exactly the pencil $P(0)$. It is well-known that these representations are all unitary. The following table, which is based on results of Joseph \cite{J}, gives the parameters for them.

\begin{center}
\begin{tabular}{l|r}
Type &   $\lambda_L=\lambda_R$ \\ \hline
 $A_{2n+1}$ & $\rho-\varpi_{n+1}$ \\
 $A_{2n}$ & $\rho-\frac{1}{2}(\varpi_{n}+\varpi_{n+1})$ \\
 $B_n$ & $\rho-\frac{1}{2}(\varpi_{n-2}+\varpi_{n-1})$ \\
 $C_n$ & $\rho-\frac{1}{2}\varpi_n$ \\
 $D_n$ &$\rho-\varpi_{n-2}$ \\
 $E_6, E_7, E_8$ & $\rho-\varpi_4$ \\
 $F_4$ & $\rho-\frac{1}{2}(\varpi_3+\varpi_4)$ \\
 $G_2$ & $\rho-\frac{2}{3}\varpi_2$ \\
\end{tabular}
\end{center}

\begin{prop}\label{prop-minimal}
Let $G$ be a connected complex simple Lie group. Then the minimal representation of $G$ has nonzero Dirac cohomology if and only if $G$ is $A_{2n}$, $B_n$, $C_{2n}$ or $F_4$.
\end{prop}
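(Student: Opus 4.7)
The plan is to invoke Proposition \ref{prop-D-spin-lowest}(a): $\pi_{\min}$ has nonzero Dirac cohomology if and only if $\|\pi_{\min}\|_{\mathrm{spin}}=\|\Lambda\|$, where $\Lambda$ denotes the infinitesimal character. I would first compute both sides in closed form, and then verify the equality case by case across the eight Dynkin types.

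For the spin norm side, $\pi_{\min}$ is a ladder representation whose $K$-types comprise exactly the Vogan pencil $P(0)=\{n\beta\mid n\in\mathbb{N}\}$, each with multiplicity one. Hence $\|\pi_{\min}\|_{\mathrm{spin}}=P_0$, and since $\delta=0$ is u-small, formula \eqref{P-mu} gives
\[
P_0=\min\bigl\{\|\{n\beta-\rho\}+\rho\|\,\bigm|\, n\in\mathbb{N},\ n\beta\text{ is u-small}\bigr\}.
\]
The u-small constraint $n\langle\beta,\varpi_i\rangle\leq 2\langle\rho,\varpi_i\rangle$ for every $i\in[l]$ pins $n$ to a tiny finite set (typically $\{0,1\}$, occasionally slightly larger). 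For the infinitesimal character side, reading $\lambda_L=\lambda_R=\rho-c_G$ from the table and decomposing orthogonally along $\mathfrak{h}=\mathfrak{t}\oplus\mathfrak{a}$ as in \eqref{identifications}, with norm convention fixed by the tempered normalisation $\|J(\lambda,-\lambda)\|_{\mathrm{spin}}=\|2\lambda\|=\|\Lambda\|$ from the proof of Proposition \ref{prop-tempered}, one obtains
\[
\|\Lambda\|^{2}=\|\lambda_L-\lambda_R\|^{2}+\|\lambda_L+\lambda_R\|^{2}=\|2\rho-2c_G\|^{2}.
\]

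Verifying $P_0=\|2\rho-2c_G\|$ then reduces to a short type-by-type computation: (i) write $\beta$ and $c_G$ in the fundamental-weight basis; (ii) enumerate the u-small $n\beta$; (iii) conjugate each $n\beta-\rho$ into the closed dominant Weyl chamber via a handful of simple reflections, using $\|\{n\beta-\rho\}\|=\|n\beta-\rho\|$ as a length sanity check; (iv) compute $\|\{n\beta-\rho\}+\rho\|^{2}$ from the inner products of the $\varpi_i$; (v) take the minimum over the permitted $n$ and compare with $\|2\rho-2c_G\|^{2}$. The main obstacle is the bookkeeping in step (iii), but since only a couple of values of $n$ arise per type the computation is short. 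I expect equality to hold precisely for $A_{2n},\ B_n,\ C_{2n},\ F_4$; the parity restrictions in types $A$ and $C$ should emerge naturally because, for odd rank, the half-integral coefficients of $c_G$ misalign with every attainable $\{n\beta-\rho\}+\rho$, forcing a strict inequality $P_0>\|2\rho-2c_G\|$, while in the remaining classical/exceptional types the optimal spin-norm candidate simply fails to match the target on the nose.
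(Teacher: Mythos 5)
Your core strategy coincides with the paper's: use Proposition \ref{prop-D-spin-lowest}(a), identify $\|\pi_{\min}\|_{\mathrm{spin}}$ with $P_0$ (since the $K$-types are exactly the pencil $P(0)$), and compare with the infinitesimal-character norm $\|2\lambda\|$. However, two of your intermediate claims are wrong and would derail the execution.

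First, the assertion that the u-small constraint ``pins $n$ to a tiny finite set (typically $\{0,1\}$)'' is false for the classical families: the u-small window grows linearly in the rank, and so does the optimal multiple of $\beta$. Indeed the paper's computation finds $P_0 = \|n\beta\|_{\mathrm{spin}}$ for $A_{2n}$, $P_0=\|(n-1)\beta\|_{\mathrm{spin}}$ for $B_n$, and $P_0=\|n\beta\|_{\mathrm{spin}}$ for $C_{2n}$. So a naive per-type, rank-independent enumeration will not work; you need a rank-uniform closed-form treatment of $\|m\beta\|_{\mathrm{spin}}$ over all u-small $m\beta$.

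Second, your heuristic for the ``parity'' --- that half-integral coefficients of $c_G$ cause misalignment in the odd-rank cases --- is not the right mechanism. In fact $c_{A_{2n}}=\tfrac12(\varpi_n+\varpi_{n+1})$ is half-integral and that case \emph{does} have nonzero Dirac cohomology, while $c_{A_{2n+1}}=\varpi_{n+1}$ is integral and that case does not; and $c_{C_{2n-1}}=\tfrac12\varpi_{n}$ is half-integral yet the answer is negative. The paper instead gets the negative cases $A_{2n+1}$, $D_n$, $E_6$, $E_7$, $E_8$, $G_2$ essentially for free from the reduction \eqref{BP}: there the infinitesimal character $2\lambda=2\rho-2c_G$ is either singular or non-integral, so by Theorem~\ref{thm-HP} the Dirac cohomology must already vanish, with no pencil computation needed. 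Only $C_{2n-1}$ requires the explicit comparison, where one finds $P_0=\|n\beta\|_{\mathrm{spin}}=\|(n-1)\beta\|_{\mathrm{spin}}>\|2\lambda\|$. Incorporating this reduction would both shorten your case analysis and replace the flawed parity heuristic with the actual reason the excluded types fail.
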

\begin{proof}
According to \eqref{BP} and the above table, it suffices to consider $A_{2n}$, $B_n$, $C_n$ and $F_4$. Denote by $\lambda$ the parameters $\lambda_L=\lambda_R$ in the above table.

One calculates that $P_0=\|n\beta\|_{\mathrm{spin}}=\|2\lambda\|$ for $A_{2n}$, that $P_0=\|(n-1)\beta\|_{\mathrm{spin}}=\|2\lambda\|$ for $B_{n}$,
that $P_0=\|n\beta\|_{\mathrm{spin}}=\|2\lambda\|$ for $C_{2n}$, and that $P_0=\|4\beta\|_{\mathrm{spin}}=\|2\lambda\|$ for $F_{4}$. Thus in all these cases $H_D(\pi_{\mathrm{min}})\neq 0$ by Proposition \ref{prop-D-spin-lowest}.

For $C_{2n-1}$, we have $P_0=\|n\beta\|_{\mathrm{spin}}=\|(n-1)\beta\|_{\mathrm{spin}}>\|2\lambda\|$. Thus $H_D(\pi_{\mathrm{min}})$ vanishes.
\end{proof}

\subsection{Model representations}\label{subsec-model}
The model representations are $\pi_{\mathrm{mod}}=J(\frac{\rho}{2}, \frac{\rho}{2})$.
 By Theorem 2.1 of McGovern \cite{Mc}, $\pi_{\mathrm{mod}}|_K$ is multiplicity-free and it consists exactly of those self-dual $K$-types $\delta$ such that $\delta$ lies in the root lattice.
The following result is elementary.

\begin{lemma}\label{lemma-rho-rt}
Let $G$ be a connected complex simple Lie group. Then $\rho$ lies in the root lattice of $G$ if and only if $G$ is $A_{2n}$, $C_{4n-1}$, $C_{4n}$, $D_{4n}$, $D_{4n+1}$, $G_2$, $F_4$, $E_6$ or $E_8$.
\end{lemma}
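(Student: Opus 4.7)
The plan is to reduce the statement to computing the class of $\rho = \sum_{i=1}^{l} \varpi_i$ in the fundamental group $P/Q$ (where $P$ and $Q$ are the weight and root lattices of $\frg$, and $l$ is the rank), and then to verify the answer type by type using the standard description of $P/Q$ together with the images of the fundamental weights.

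The exceptional types $G_2$, $F_4$ and $E_8$ are immediate, since $P=Q$ for these, so $\rho\in Q$ automatically; these do appear in the list. For the classical types I would work in the standard coordinate realization on $\mathbb{R}^l$. For type $A_l$ one has $P/Q\cong\mathbb{Z}/(l+1)$ with $\varpi_i\mapsto i$, hence $\rho\mapsto l(l+1)/2\pmod{l+1}$, which vanishes iff $l$ is even, yielding the family $A_{2n}$. For $B_l$, the only fundamental weight with nonzero image in $P/Q\cong\mathbb{Z}/2$ is the spin weight $\varpi_l$, so $\rho\equiv\varpi_l\not\equiv 0$ and $B_l$ is excluded for every $l$.

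For $C_l$, the root lattice is the sublattice of $\mathbb{Z}^l$ consisting of vectors with even coordinate sum, and one computes $\rho=(l,l-1,\dots,1)$ with coordinate sum $l(l+1)/2$; this is even iff $l\equiv 0$ or $3\pmod 4$, which gives precisely $C_{4n}$ and $C_{4n-1}$. For $D_l$, the root lattice is again the even-sum sublattice of $\mathbb{Z}^l$, and $\rho=(l-1,l-2,\dots,1,0)$ has coordinate sum $l(l-1)/2$; this is even iff $l\equiv 0$ or $1\pmod 4$, which gives $D_{4n}$ and $D_{4n+1}$. (One does not need to worry about the half-integral spinor weights here because $\rho$ itself has integer coordinates, so belonging to $Q$ is detected solely by the parity of the coordinate sum.)

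It remains to handle $E_6$ and $E_7$. My approach would be to expand $\rho$ in the basis of simple roots via the inverse Cartan matrix: the coefficient of $\alpha_i$ in $\rho=\sum_j\varpi_j$ is the $i$-th row sum of $C^{-1}$, so $\rho\in Q$ iff all these row sums are integers. For $E_6$ a direct check with the standard $C^{-1}$ yields $\rho=8\alpha_1+11\alpha_2+15\alpha_3+21\alpha_4+15\alpha_5+8\alpha_6\in Q$, while for $E_7$ at least one row sum of $C^{-1}$ is a proper half-integer, which forces $\rho\notin Q$. The only real obstacle is the small amount of bookkeeping in the exceptional cases, and that is routine given standard root-system tables.
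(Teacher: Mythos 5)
Your proof is correct. The paper itself offers no argument, dismissing the lemma as ``elementary,'' so there is nothing to compare against; your reduction to the class of $\rho=\sum_i\varpi_i$ in $P/Q$, verified type by type using the coordinate realizations for the classical series, the triviality of $P/Q$ for $G_2,F_4,E_8$, and the row sums of $C^{-1}$ for $E_6,E_7$, is the standard way to establish this, and every case you compute ($A_l$ when $l$ even; $B_l$ never since $\rho\equiv\varpi_l$; $C_l$ and $D_l$ by the parity of $l(l+1)/2$ and $l(l-1)/2$ respectively; $E_6$ in, $E_7$ out) matches the stated list.
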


We remark that the model representation may or may not be unitary. For instance, it is unitary for $G_2$ and $E_6$, while not unitary for $C_3$, $C_4$ and $F_4$. Since $2\lambda=\rho$ for $\pi_{\mathrm{mod}}$, whenever it is unitary, we have that $H_D(\pi_{\mathrm{mod}})$ is nonzero if and only if $\rho$ occurs as a $K$-type in $\pi_{\mathrm{mod}}$. Since $\rho$ is self-dual, the latter happens if and only if $G$ is in the list of Lemma \ref{lemma-rho-rt}.

\section{The spherical unitary dual}\label{sec-spherical}

This section aims to study $\widehat{G}^{\mathrm{sd}}$, the set of \emph{non-trivial} representations with nonzero Dirac cohomology in the spherical unitary dual of $G$.
We emphasize that since the trivial representation has been excluded, the set $\widehat{G}^{\mathrm{sd}}$ could be empty. As we shall see, spherical representations live in the $w_0$-family.

\subsection{Reduction to finitely many candidates}
As mentioned earlier, for the study of Dirac cohomology, it suffices to consider
$$
J(\lambda, -s \lambda)
$$
where $s\in W$ is an involution, and $2\lambda$ is dominant integral and regular.
This representation has lowest $K$-type $\{\lambda+s\lambda\}$. Thus for it to be spherical, we must have
$s\lambda=-\lambda$. Since $\lambda$ is regular, this forces $s=w_0$, the longest element of $W$. Indeed, taking any positive root $\alpha$, we have
$$
\langle \lambda, s(\alpha)\rangle=\langle s\lambda, \alpha\rangle=\langle -\lambda, \alpha\rangle<0.
$$
Therefore, $s(\alpha)$ is a negative root. This shows that $s=w_0$.
The following result is well-known, see \cite{Hum}.

\begin{lemma}\label{lemma-w0}
Let $G$ be a connected complex simple Lie group. Then $w_0=-1$ if and only if $G$ is $A_{1}$, $B_n$, $C_n$, $D_{2n}$, $G_2$, $F_4$, $E_7$ or $E_8$.
\end{lemma}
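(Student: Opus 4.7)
The plan is to invoke the opposition involution. Since $w_0$ sends $\Delta^+(\frg_0,\frh_0)$ bijectively to $-\Delta^+(\frg_0,\frh_0)$, the map $\alpha\mapsto -w_0\alpha$ permutes the simple roots $\{\alpha_1,\dots,\alpha_l\}$ and preserves all inner products, hence defines an automorphism $\sigma$ of the Dynkin diagram. Clearly $w_0 = -1$ if and only if $\sigma=\mathrm{id}$, equivalently $-w_0\varpi_i = \varpi_i$ for every $i\in[l]$. So the lemma reduces to identifying the simple types on which the opposition involution $\sigma$ is trivial.

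First I would dispose of the types whose Dynkin diagram admits no nontrivial automorphism, namely $A_1$, $B_n$, $C_n$ ($n\ge 2$), $G_2$, $F_4$, $E_7$, $E_8$. For these, $\sigma=\mathrm{id}$ is automatic, so $w_0=-1$. (For $A_1$ the Weyl group is $\{\pm 1\}$, so $w_0=-1$ directly.)

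Next I would handle the remaining types $A_n$ with $n\ge 2$, $D_n$, and $E_6$, each of which carries a nontrivial diagram involution. For $A_n$ ($n\ge 2$) and $E_6$, a short direct computation (or the standard tabulation in Humphreys) shows that $\sigma$ is the unique nontrivial diagram automorphism; in particular $-w_0\varpi_1 \neq \varpi_1$, so $w_0\neq -1$. For $D_n$, labelling the simple roots in the standard way with $\alpha_{n-1}$ and $\alpha_n$ the two nodes at the fork, one checks that $\sigma$ fixes $\alpha_1,\dots,\alpha_{n-2}$ in every case, and either fixes or swaps $\alpha_{n-1}$ and $\alpha_n$ depending on the parity of $n$: using the realization of $W(D_n)$ as signed permutations of $\{1,\dots,n\}$ with an even number of sign changes, $w_0$ acts as $-1$ precisely when $n$ is even, and as the diagonal sign change $(1,\dots,1,-1)$ composed with $-1$ on the remaining coordinates when $n$ is odd, giving $\sigma = \mathrm{id}$ iff $n$ is even. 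Thus $w_0=-1$ exactly for $D_{2n}$ among the $D$-series.

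Collecting the cases yields precisely the list $A_1$, $B_n$, $C_n$, $D_{2n}$, $G_2$, $F_4$, $E_7$, $E_8$. There is no real obstacle: the only piece of genuine content is the parity computation for $D_n$ and the check that the opposition involution on $A_n$ ($n\ge 2$) and $E_6$ is nontrivial, both of which are standard.
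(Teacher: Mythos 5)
Your proof is correct, and it follows the standard textbook route via the opposition involution $\alpha\mapsto -w_0\alpha$ on the Dynkin diagram; the paper itself gives no proof here, simply citing Humphreys \cite{Hum}, where this is precisely the argument used. The only cosmetic remark is that your description of the longest element of $W(D_n)$ for $n$ odd is slightly awkwardly worded (it is $-1$ composed with a single sign change, equivalently the sign change on $n-1$ coordinates), but the parity argument — $-1\in W(D_n)$ iff $n$ is even — is correct and is the genuine content of the $D$-case.
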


 Except for the trivial representation, any irreducible spherical unitary representation of $G$ must be infinite dimensional.  Thus it must contain $P(0)$, the pencil starting from the trivial $K$-type.

Thus in view of Proposition \ref{prop-D-spin-lowest}, we should have
 $$\| 2\lambda\|\leq P_0.$$  Moreover, there should exist a $K$-type $\delta$ in
 $J(\lambda, -s \lambda)$ such that
 $$
 \{\delta-\rho\}+\rho=2\lambda.
 $$
One sees easily that the LHS above equals $\delta+\sum_{i} n_i \alpha_i$, where $n_i$ are some non-negative integers. By Frobenius reciprocity and the highest weight theorem, $\delta$ lies in the root lattice. We conclude that $2\lambda$ must lie in the root lattice.

To sum up, to find all the non-trivial representations with nonzero Dirac cohomology in the spherical unitary dual, it suffices to consider
\begin{equation}\label{BP-spherical}
J(\lambda, \lambda),
\end{equation}
where  $2\lambda$ is dominant integral and regular, and such that
\begin{itemize}
\item[a)] $\|2\lambda\|\leq P_0$;
\item[b)] $2\lambda$ lies in the root lattice;
\item[c)] $w_0\lambda=-\lambda$.
\end{itemize}
These requirements reduce the candidates to finitely many ones. The following table summarizes the information for some examples. Here the second row denotes the number of representations described in \eqref{BP-spherical}.

\begin{center}
\begin{tabular}{c|c|c|c|c|c|c|c|c}
$A_6$ &   $B_6$ & $C_6$ & $D_6$ & $E_6$ & $E_7$ & $E_8$ & $F_4$ &  $G_2$ \\ \hline
  $9$ & $28$ & $167$ & $18$ & $11$ & $116$ & $1080$ & $8$ & $2$\\
\end{tabular}
\end{center}

 The reduction above allows us to understand  $\widehat{G}^{\mathrm{sd}}$ in examples via using \texttt{atlas}. We will simply refer to $J(\lambda, \lambda)$ by $2\lambda$, which is expressed in terms of fundamental weights. That is, $2\lambda=[n_1, \cdots, n_l]$ means $2\lambda=\sum_{i=1}^{l} n_i \varpi_i$.

\subsection{Classical groups}
Let us present our calculations for some classical groups.

\begin{lemma}\label{lemma-spherical-classical} We have the following.
\begin{itemize}
\item[a)] $\widehat{A}_n^{\mathrm{sd}}$ is empty for $n=1, 3, 5$, while
$\widehat{A}_2^{\mathrm{sd}}=\{[1, 1]\}$,
$$\widehat{A}_4^{\mathrm{sd}}=\{[1, 1, 1, 1], [2, 1, 1, 2]\},
$$
and
$$
\widehat{A}_6^{\mathrm{sd}}=\{[1, 1, 1, 1, 1, 1], [2, 1, 1, 1, 1, 2], [2, 2, 1, 1, 2, 2]\}.
$$

\item[b)] $\widehat{B}_3^{\mathrm{sd}}=\{\pi_{\mathrm{min}}=[1,1,2 ]\}$, $\widehat{B}_4^{\mathrm{sd}}=\{\pi_{\mathrm{min}}=[2, 1, 1, 2], [1, 1, 1, 2]\}$,
     $$
    \widehat{B}_5^{\mathrm{sd}}=\{\pi_{\mathrm{min}}=[2, 2, 1, 1, 2], [1, 1, 1, 1, 2]\}.
    $$
    and
    $$
    \widehat{B}_6^{\mathrm{sd}}=\{\pi_{\mathrm{min}}=[2, 2, 2, 1, 1, 2], [2, 1, 1, 1, 1, 2], [1, 1, 1, 1, 1, 2]\}.
    $$

\item[c)] $\widehat{C}_n^{\mathrm{sd}}$ is empty for $n=3, 5$, while
$\widehat{C}_n^{\mathrm{sd}}=\{\pi_{\mathrm{min}}\}$ for $n=2, 4, 6$.

\item[d)] $\widehat{D}_n^{\mathrm{sd}}=\{\pi_{\mathrm{mod}}\}$ for $n=4, 5$, while $\widehat{D}_6^{\mathrm{sd}}=\{[2, 1, 1, 1, 1, 1]\}$.
\end{itemize}
All of them are $K$-multiplicity free.
\end{lemma}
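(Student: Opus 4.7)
The plan is to apply the reduction of Section 5 one group at a time. For each group on the list, conditions (a)--(c) cut out an explicit finite list of candidate parameters $2\lambda=[n_1,\ldots,n_l]$, and the proof reduces to two tasks: testing each candidate for unitarity, and---for the unitary survivors---testing whether $\|2\lambda\|=\|J(\lambda,\lambda)\|_{\mathrm{spin}}$, which by Proposition \ref{prop-D-spin-lowest} is equivalent to nonvanishing of Dirac cohomology.

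First, enumerate the candidates. By Lemma \ref{lemma-w0}, for types $A_1,B_n,C_n,D_{2n}$ condition (c) is vacuous, and the search becomes the tuples $(n_1,\ldots,n_l)\in\mathbb{P}^l$ with $2\lambda$ in the root lattice and $\|2\lambda\|\le P_0$; for $A_n$ ($n$ even) and $D_5$ one imposes the additional constraint $w_0\lambda=-\lambda$, which amounts to the appropriate diagram-involution symmetry on $[n_1,\ldots,n_l]$. This reproduces the tabulated counts for the larger ranks and gives a much shorter list for the smaller ranks covered in the lemma.

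Second, sift by unitarity using \texttt{atlas}. In each case this discards the overwhelming majority of candidates; for instance, among the $167$ parameters for $C_6$ only $\pi_{\mathrm{min}}$ is unitary, and for $C_3, C_5$ the minimal representation is unitary but has vanishing Dirac cohomology by Proposition \ref{prop-minimal}. For each surviving $J(\lambda,\lambda)$ one then needs the spin-lowest $K$-type. By Proposition \ref{Vogan-K-types-pattern} the $K$-spectrum is a union of pencils $P(\mu_i)$, and by \eqref{P-mu} the minimum spin norm on each pencil is attained on a u-small $K$-type (or at the starting $K$-type if none is u-small). Since u-small $K$-types form a finite set, once the starting $K$-types $\mu_i$ are read off from \texttt{atlas} the spin norm is determined by a finite computation, and $K$-multiplicity freeness is read off the same output. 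For $\pi_{\mathrm{min}}$ the pencil $P(0)$ structure already handled in Proposition \ref{prop-minimal} applies directly; for $\pi_{\mathrm{mod}}$ (appearing in the $D_4, D_5$ cases) the $K$-type classification of McGovern \cite{Mc}, together with Lemma \ref{lemma-rho-rt}, gives the answer without further work.

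The main obstacle is the sheer scale of the enumeration, especially $C_6$ with its $167$ candidates, and ensuring that the \texttt{atlas} runs have explored far enough into each pencil to certify the spin-lowest $K$-type. The a priori u-small truncation provided by \eqref{P-mu} is what keeps this from being an open-ended search: beyond the u-small convex hull the spin norm along any pencil is controlled in closed form by $\|\mu_i\|_{\mathrm{spin}}$, so only the finitely many pencil starters inside that hull need to be inspected in each candidate, making the entire verification rigorously finite.
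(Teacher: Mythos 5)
Your proposal is correct and reconstructs exactly the methodology the paper has in mind: the paper gives no explicit proof of this lemma, instead presenting it as a computer-assisted result obtained by running the reduction of \S 5.1 (conditions (a)--(c) and the candidate counts) through \texttt{atlas}, as illustrated for the exceptional groups in Examples \ref{exam-G2}--\ref{exam-E6}, and the paper's only comment is that the surviving representations are the unipotent ones from Section 5 of \cite{BP}. You correctly identify the finite search space, the unitarity sieve, the spin-norm/u-small truncation via \eqref{P-mu} that makes the pencil search finite, and the shortcuts provided by Proposition \ref{prop-minimal}, Lemma \ref{lemma-rho-rt}, and McGovern's $K$-spectrum of $\pi_{\mathrm{mod}}$; the only cosmetic slip is describing the $w_0\lambda=-\lambda$ constraint as applying to ``$A_n$ ($n$ even) and $D_5$'' when in fact it also constrains $A_3$ and $A_5$ (only $A_1$ among type $A$ has $w_0=-1$), though this does not affect the argument since those cases come out empty.
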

We remark that all the representations in the above lemma are unipotent ones. Indeed, Section 5 of \cite{BP} offers excellent interpretations for them.

\subsection{Exceptional groups}

\begin{example}\label{exam-G2} Let us consider $G_2$. In this case, there are two representations
meeting the requirements of \eqref{BP-spherical}:
$$[1, 1], [2, 1].$$
Then \texttt{atlas} calculates that the first representation is unitary, while the second one is not. For instance, we put the first representation into \texttt{atlas} via the following commands:
\begin{verbatim}
set G=complex(simply_connected(G2))
set x=x(trivial(G))
set p=param(x, [0,0,0,0],[1,1,0,0])
\end{verbatim}
To test its unitarity, we use the command
\begin{verbatim}
is_unitary(p)
\end{verbatim}
The output is
\begin{verbatim}
Value: true
\end{verbatim}
The $K$-type $[1, 1]$ has \texttt{atlas} height $16$, and the following command looks at its $K$-types up to this height:
\begin{verbatim}
branch_irr(p, 16)
\end{verbatim}
The output is
\begin{verbatim}
Value:
1*parameter(x=0,lambda=[0,0,0,0]/1,nu=[0,0,0,0]/1) [0]
1*parameter(x=0,lambda=[1,0,0,0]/1,nu=[0,0,0,0]/1) [6]
1*parameter(x=0,lambda=[0,1,0,0]/1,nu=[0,0,0,0]/1) [10]
1*parameter(x=0,lambda=[1,0,1,0]/1,nu=[0,0,0,0]/1) [12]
1*parameter(x=0,lambda=[1,1,0,0]/1,nu=[0,0,0,0]/1) [16]
\end{verbatim}
The last line above tells us that the $K$-type $[1, 1]$ occurs with multiplicity one in the model representation. (Of course, this was already known by McGovern \cite{Mc} in 1994.) Since
$$
\|\rho\|_{\mathrm{spin}}=\|\rho\|,
$$
we conclude from Proposition \ref{prop-D-spin-lowest} that $\widehat{G}_2^{\mathrm{sd}}=\{\pi_{\mathrm{mod}}\}$.  \qed
\end{example}

\begin{example}\label{exam-F4} Let us consider $F_4$. In this case, there are eight representations
meeting the requirements of \eqref{BP-spherical}:
\begin{eqnarray*}
&[2, 2, 1, 1], \quad [1, 2, 1, 1], \quad [2, 1, 1, 1], \quad [2, 1, 1, 2],\\
&[1, 1, 2, 1], \quad [1, 1, 1, 2], \quad [1, 1, 1, 1], \quad [3, 1, 1, 1].
\end{eqnarray*}
By Proposition \ref{prop-minimal}, $[2, 2, 1, 1]$ is the minimal representation and has nonzero Dirac cohomology. \texttt{atlas} calculates that the seven remaining representations are not unitary. We conclude that $\widehat{F}_4^{\mathrm{sd}}=\{\pi_{\mathrm{min}}\}$.
\qed
\end{example}

\begin{example}\label{exam-E6} Let us consider $E_6$. In this case, there are eleven representations
meeting the requirements of \eqref{BP-spherical}. Among them, $[1, 1, 1, 1, 1, 1]$ stands for the model representation, and it has nonzero Dirac cohomology by the discussion in \S \ref{subsec-model}. \texttt{atlas} calculates the $K$-types pattern of the other ten representations, and we deduce from Parthasarathy's Dirac inequality that they are not unitary. Details are given in the following table.
\begin{center}
\begin{tabular}{c|c}
$2\lambda$ &   $K$-type $\delta$  \\
\hline
$[1, 1, 2, 1, 2, 1]$ & $[1, 0, 0, 1, 0, 1]$ \\
$[1, 4, 1, 1, 1, 1]$ & $[1, 1, 0, 0, 0, 1]$ \\
$[3, 1, 1, 1, 1, 3]$ & $[1, 1, 0, 0, 0, 1]$ \\
$[1, 2, 1, 2, 1, 1]$ & $[1, 1, 0, 0, 0, 1]$ \\
$[2, 1, 1, 2, 1, 2]$ & $[1, 0, 1, 1, 0, 0]$ \\
$[1, 2, 1, 1, 1, 1]$ & $[1, 0, 1, 1, 1, 1]$ \\
$[2, 2, 1, 1, 1, 2]$ & $[0, 0,  1, 0, 1, 0]$ \\
$[2, 1, 1, 1, 1, 2]$ & $[1, 0, 1, 0, 1, 1]$ \\
$[1, 1, 1, 2, 1, 1]$ & $[1, 1, 1, 1, 0, 0]$ \\
$[1, 3, 1, 1, 1, 1]$ & $[1, 0, 1, 1, 0, 0]$
\end{tabular}
\end{center}
The second column of the table above specifies a $K$-type $\delta$ in $J(\lambda, \lambda)$
such that $$P_{\delta}<\|2\lambda\|.$$
We conclude that $\widehat{E}_6^{\mathrm{sd}}=\{\pi_{\mathrm{mod}}\}$.
\qed
\end{example}

\subsection{A conjecture}
The previous calculation leads us to make the following.
\begin{conj}\label{conj-spherical-classical}
The set $\widehat{G}^{\mathrm{sd}}$ can be described as follows.
\begin{itemize}
\item[a)] $\widehat{A}_{2n-1}^{\mathrm{sd}}$ is empty, while
$\widehat{A}_{2n}^{\mathrm{sd}}$ consists of the following $n$ representations:
$$
[\underbrace{2,\dots, 2}_{p}, \underbrace{1, \dots, 1}_{2n-2p}, \underbrace{2,\dots, 2}_{p}], \quad 0\leq p\leq n-1.
$$

\item[b)] $\widehat{B}_n^{\mathrm{sd}}$ consists of the following $[\frac{n}{2}]$ representations:
    $$
[\underbrace{2,\dots, 2}_{p(a, b)}, \underbrace{1, \dots, 1,}_{n-p(a, b)-1} 2],
$$
where $a+b=n$, $b\geq a\geq 1$, and $p(a, b):=\max\{b-a-1, 0\}$.

\item[c)] $\widehat{C}_{2n-1}^{\mathrm{sd}}$ is empty, while
$\widehat{C}_{2n}^{\mathrm{sd}}=\{\pi_{\mathrm{min}}\}$, where $\pi_{\mathrm{min}}$ stands for the minimal representation.

\item[d)] $\widehat{D}_n^{\mathrm{sd}}$ consists of the following  $[\frac{n}{4}]$ representations:
$$
[\underbrace{2,\dots, 2}_{p(a, b)}, \underbrace{1, \dots, 1}_{n-p(a, b)}],
$$
where $a+b=n$, $b\geq a\geq 2$, $a$ is even, and $p(a, b):=\max\{b-a-1, 0\}$.
\end{itemize}
In particular, any representation in $\widehat{G}^{\mathrm{sd}}$ is $K$-multiplicity free.
\end{conj}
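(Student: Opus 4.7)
The plan is to combine the reduction in \eqref{BP-spherical} with Barbasch's classification of the spherical unitary dual of complex classical groups, then verify the Dirac cohomology condition via Proposition \ref{prop-D-spin-lowest} and the pencil formula \eqref{P-mu}. Since each part of the conjecture lists only finitely many representations in each fixed rank, the task splits into showing that (i) every $\pi$ on the list indeed lies in $\widehat{G}^{\mathrm{sd}}$ and is $K$-multiplicity-free, and (ii) every $\pi$ not on the list fails to lie in $\widehat{G}^{\mathrm{sd}}$.

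For step (i), I would identify each listed representation as a special unipotent representation attached to a nilpotent orbit, extending the interpretations of Section 5 of \cite{BP} used in Lemma \ref{lemma-spherical-classical}. For such representations the $K$-spectrum is known (via Barbasch--Vogan, McGovern, or theta-lifting), so one can exhibit a $K$-type $\delta$ in $\pi$ whose spin norm equals $\|2\lambda\|$. Proposition \ref{prop-D-spin-lowest}(b) then gives $H_D(\pi)\neq 0$, and $K$-multiplicity-freeness yields the uniqueness statement. The case $C_{2n}$ is already covered by Proposition \ref{prop-minimal}.

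For step (ii), conditions (a)--(c) following \eqref{BP-spherical} cut the candidates $J(\lambda,\lambda)$ down to an explicit finite list (depending polynomially on $n$ in types $A$, $B$, $D$). Each such candidate not appearing in the conjectured list must be eliminated. If the candidate is non-unitary, this is detected by Barbasch's classification; if it is unitary, one must exhibit a $K$-type $\delta$ in $J(\lambda,\lambda)$ with $P_\delta<\|2\lambda\|$, extracted via Frobenius reciprocity from the Langlands data, exactly the strategy illustrated in Example \ref{exam-E6}. The parity constraints in the conjecture (type $A_{2n-1}$ empty, type $C_{2n-1}$ empty) should follow from a uniform pencil argument using \eqref{P-mu}: in these types, every palindromic $\lambda$ in the root lattice meeting $\|2\lambda\|\le P_0$ either fails unitarity or produces a spin-small $K$-type forcing $H_D = 0$.

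The main obstacle will be making the elimination in step (ii) uniform as $n$ grows, since a direct \texttt{atlas}-style case-by-case check is not feasible. I expect the cleanest route to be inductive, using the cohomological induction framework of Section 3 to reduce to a $\theta$-stable Levi whenever the Zhelobenko parameter admits a non-trivial good-range reduction, and then handling the base cases via Proposition \ref{prop-I(s)-empty}. A secondary difficulty is that for the unipotent $\pi$ on the list that are not of $A_{\frq}(\lambda)$-type, computing a spin-lowest $K$-type in closed form requires combinatorial input from the theory of nilpotent orbits, which is likely where the proof will be most technical.
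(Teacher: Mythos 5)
The statement you are asked to prove is Conjecture~\ref{conj-spherical-classical}, and the paper itself offers no proof: it is stated explicitly as a conjecture. The authors acknowledge, in the paragraph immediately following it, that step~(i) of your plan is essentially settled by Section~5 of \cite{BP} (all the listed representations are unipotent, have nonzero Dirac cohomology, and are $K$-multiplicity free), while step~(ii)---exhaustiveness---remains open: ``this still seems to be rather non-trivial (for the authors).'' Your assessment therefore matches the paper's own.

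That said, there is a structural issue with the strategy you sketch for step~(ii). Spherical representations live in the $w_0$-family, and when $w_0=-1$ (which is the only case the conjecture needs to cover, by Lemma~\ref{lemma-w0}) one has $I(w_0)=\emptyset$: no fundamental weight is fixed by $-1$. So the cohomological-induction reduction to a proper $\theta$-stable Levi of Proposition~\ref{prop-I(s)-non-empty} simply does not apply here; the only tool the paper's framework gives is Proposition~\ref{prop-I(s)-empty} directly on $G$, which yields finiteness of the candidate set but not an induction on rank. Moreover, this finite candidate set grows with $n$ (the table after \eqref{BP-spherical} already records, e.g., 167 candidates for $C_6$ and 1080 for $E_8$), so a rank-uniform elimination would need a genuinely new idea---e.g., a closed-form analysis of $P_\delta$ along the pencil for the specific palindromic $\lambda$ that survive conditions (a)--(c), combined with Barbasch's explicit description of the complex spherical unitary dual in \cite{B}. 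You correctly flag that the combinatorics of nilpotent orbits and $K$-spectra will be the technical bottleneck, and that part of your proposal is sound as a research program; it just is not yet a proof, and the paper does not supply one either.
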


Thanks to the work carried out in Section 5 of \cite{BP}, all the representations above are unipotent ones with nonzero Dirac cohomology, and all of them are $K$-multiplicity free.
Thus the hard part of the conjecture is to show that these representations \emph{exhaust} the set $\widehat{G}^{\mathrm{sd}}$. That is, $\widehat{G}^{\mathrm{sd}}$ should contain no other representation. Although the unitary dual of classical complex Lie groups has been described by Vogan \cite{Vog86} and Barbasch \cite{B}, this still seems to be rather non-trivial (for the authors). For exceptional groups, we would like to guess that $\widehat{E}_7^{\mathrm{sd}}$ is empty, while $\widehat{E}_8^{\mathrm{sd}}=\{\pi_{\mathrm{mod}}\}$.

\section{A computing method}\label{sec-comp}

This section aims to introduce a method that allows us to compute all the members of $\widehat{G}^{\mathrm{d}}$ in any $s$-family such that $I(s)$ is empty. Thus by Proposition \ref{prop-I(s)-non-empty}, eventually we can compute the entire $\widehat{G}^{\mathrm{d}}$.
Our basic idea is to use Parthasarathy's Dirac inequality and Vogan pencil.
More precisely, we proceed as follows:
\begin{itemize}
\item[$\bullet$] calculate the lowest $K$-type $\mu:=\{\lambda+s\lambda\}$ for $J(\lambda, -s\lambda)$.
\item[$\bullet$] when $\lambda$ is large, calculate
\begin{equation}\label{Delta-1}
\Delta_1(\lambda):= \|2\lambda\|^2-\|\mu\|_{\mathrm{spin}}^2.
\end{equation}
\item[$\bullet$] when $\lambda$ is small, calculate
\begin{equation}\label{Delta-2}
\Delta_2(\lambda):= \|2\lambda\|^2-P_\mu^2.
\end{equation}
\end{itemize}
Here as in \eqref{P-mu-prime}, $P_\mu$ is the minimal spin norm of the $K$-types lying on $P(\mu)$---the Vogan pencil starting from $\mu$.  Note that $\Delta_2(\lambda)\leq 0$ sharpens $\Delta_1(\lambda)\leq 0$ whenever $\mu$ is u-small, see \eqref{P-mu}. We call them \emph{discriminants} for $\lambda$. Whenever either discriminant is positive, the representation  $J(\lambda, -s\lambda)$ is non-unitary.
To have more flexibility, we shall just leave the  precise description of ``large" and ``small" blank. However, looking at the boundary of the u-small convex hull is always helpful.

Since $I(s)$ is empty, Proposition \ref{prop-I(s)-empty} guarantees that we are left with at most finitely many candidate representations. Then by \texttt{atlas} \cite{At}, one can eventually handle them completely.

Let us illustrate this method carefully for $G_2$, whose unitary dual was determined by Duflo \cite{Du} in 1979. We denote by $\alpha_1$ the short simple root, while $\alpha_2$ is long. There are eight involutions in the Weyl group of $G_2$. Let $\lambda=[a, b]$, where $a, b\in \frac{1}{2}\mathbb{P}$. We have the following table.

\begin{center}
\begin{tabular}{r|c|r}
Involution $s$ &   $\lambda+s\lambda$ & $I(s)$ \\ \hline
 $e$ & $[2a, 2b]$ & $\{1, 2\}$ \\
 $s_1$ & $[0, a+2b]$ & $\{2\}$ \\
 $s_2$ & $[2a+3b, 0]$ & $\{1\}$\\
 $s_1 s_2 s_1$ & $[-a-3b, a+3b]$ & $\emptyset$ \\
 $s_2 s_1 s_2$ &$[3a+3b, -a-b]$ & $\emptyset$ \\
 $s_1 s_2 s_1 s_2 s_1$ & $[-3b, 2b]$ & $\emptyset$\\
 $s_2 s_1 s_2 s_1 s_2$ & $[2a, -a]$ & $\emptyset$\\
 $w_0$ & $[0, 0]$ & $\emptyset$\\
\end{tabular}
\end{center}

The three $s$-families where $I(s)$ are non-empty can be handled easily: it boils down to work on the $s$-family of the  corresponding Levi factors. The $w_0$-family has been considered in Example \ref{exam-G2}. Let us focus on the four remaining $s$-families one by one.

For $s=s_2 s_1 s_2 s_1 s_2$, we have $\mu=[a, 0]$. Therefore, $a$ must be a positive integer. When $a\geq 4$, we have $\{\mu-\rho\}=[a-4, 1]$. Then
$$
\Delta_1(\lambda)=6 a^2 + 24 a b + 24 b^2-6>0.
$$
Thus these representations are not unitary. One can also calculate that
\begin{equation*}
\Delta_2(\lambda)=
\begin{cases}
24 b^2+72 b + 30 & \mbox{ if } a=3; \\
24 b^2+ 48 b + 6 & \mbox{ if } a=2;\\
24 b^2 + 24 b -6 & \mbox{ if } a=1,
\end{cases}
\end{equation*}
which is always positive. Thus the corresponding representations are not unitary either.

For $s=s_1 s_2 s_1 s_2 s_1$, we have $\mu=[0, b]$. Therefore, $b$ must be a positive integer. When $b\geq 2$, we have $\{\mu-\rho\}=[1, b-2]$. Then
$$
\Delta_1(\lambda)=8 a^2 + 24 a b + 18 b^2-2>0.
$$
Thus these representations are not unitary. When $b=1$, one can also calculate that
$$
\Delta_2(\lambda)=8 a^2 + 24 a -2>0.
$$
Thus these representations are not unitary either.

For $s=s_1 s_2 s_1$, we have $\mu=[a+3b, 0]$. Therefore, $a+3b$ must be an integer. When $a+3b\geq 4$, we have $\{\mu-\rho\}=[a+3b-4, 1]$. Then
$$
\Delta_1(\lambda)=6 a^2 + 12 a b + 6 b^2-6 >0.
$$
Thus these representations are not unitary. When $a+3b<4$, then we must have $a=\frac{3}{2}$ and $b=\frac{1}{2}$, or $a=b=\frac{1}{2}$. Thus it remains to consider the representations
$$
J([\frac{3}{2}, \frac{1}{2}], [\frac{9}{2}, -\frac{5}{2}]), \quad J([\frac{1}{2}, \frac{1}{2}], [\frac{5}{2}, -\frac{3}{2}]).
$$
By \texttt{atlas}, the first one is not unitary, while the second one is unitary. Moreover, the latter representation has the unique spin lowest $K$-type $\rho$ such that $\|\rho\|_{\rm spin}=\|\rho\|$. Thus it belongs to $\widehat{G}_2^{\rm d}$.

For $s=s_2 s_1 s_2$, we have $\mu=[0, a+b]$.  When $a+b\geq 2$, we have $\{\mu-\rho\}=[1, a+b-2]$. Then
$$
\Delta_1(\lambda)=2 a^2 + 12 a b + 18 b^2-2 >0.
$$
Thus these representations are not unitary.
When $a+b=1$, we must have $a=b=\frac{1}{2}$, and the representation is
$$
J([\frac{1}{2}, \frac{1}{2}], [-\frac{5}{2}, \frac{3}{2}]).
$$
It is non-unitary by \texttt{atlas}.

To sum up, the set $\widehat{G}_2^{\mathrm{d}}$ is pinned down as follows, where $a, b\in\frac{1}{2}\mathbb{P}$. Note that it consists of three scattered members and three strings. Note also that the last row in Table  \ref{table-G2} is the trivial representation, while the penultimate row is the model representation.

\begin{table}[H]
\caption{The set $\widehat{G}_2^{\mathrm{d}}$}
\begin{tabular}{l|c|c|r}
$s$ &   $\lambda$   & spin LKT & mult \\
\hline
$e$ & $[a, b]$ & LKT  & $1$ \\
$s_1$ & $[1, b]$ & LKT  &  $1$ \\
$s_2$ & $[a, 1]$ &  LKT  & $1$\\
$s_1s_2s_1$ & $\frac{\rho}{2}$ &  $\rho$ & $1$\\
$w_0$ & $\frac{\rho}{2}$ &  $\rho$ & $1$\\
$w_0$ & $\rho$ &  $[0, 0]$ & $1$
\end{tabular}
\label{table-G2}
\end{table}

\section{The string part of $\widehat{F}_4^{\mathrm{d}}$}

From now on, we set $G$ to be complex $F_4$, whose Dynkin diagram is in Fig.~1, where $\alpha_1$ and $\alpha_2$ are short, while $\alpha_3$ and $\alpha_4$ are long, see page 691 of Knapp \cite{Kn} for more details.
\begin{figure}[H]
\centering \scalebox{0.6}{\includegraphics{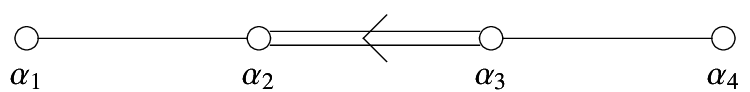}}
\caption{Dynkin diagram for $F_4$.}
\end{figure}

There are $140$ involutions in $W$, which are indexed in the Appendix. Thus we will freely refer to an involution by its index there. This section aims to figure out the string part of  $\widehat{F}_4^{\mathrm{d}}$. Namely, we shall consider the  $37$ involutions where $I(s)$ are non-empty.
Guided by Proposition \ref{prop-I(s)-non-empty}, finding members of $\widehat{F}_4^{\mathrm{d}}$ in such an $s$-family boils down to finding the members of $\widehat{L}_s^{\mathrm{d}}$ in the $s$-family of $L_s$. Since the Levi factor $L_s$ is always classical, getting the string part of $\widehat{F}_4^{\mathrm{d}}$ is relatively easier than getting its scattered part. Let us see an example.

\begin{example}
Consider the involution $s$ with index $15$, i.e., $s=s_2s_1s_3s_2$. Then $I(s)=\{4\}$.
Let $P_s$ be the $\theta$-stable parabolic subgroup of complex $F_4$ corresponding to the simple roots $\alpha_1$, $\alpha_2$ and $\alpha_3$. Let $L_s$ be its Levi subgroup. The semisimple part of $L_s$ is complex $C_3$, and one calculates that in the $s$-family of $C_3$, there is only one member of $\widehat{C}_3^{\mathrm{d}}$.
Namely, it is $J(\lambda_s, -s\lambda_s)$ for $C_3$, where $\lambda_s=[\frac{1}{2}, \frac{1}{2}, \frac{1}{2}]$. The $T$-parameter  and $A$-parameter of this representation are
$$[2, -2, 2], \quad [-1, 3, -1], $$
respectively. Moreover, $2\lambda_s$ is its unique spin lowest $K$-type occurring with multiplicity one.

By Proposition \ref{prop-I(s)-non-empty}, after tensoring with suitable central unitary characters, the module above  produces the $I(s)$-string $\lambda=[\frac{1}{2}, \frac{1}{2}, \frac{1}{2}, d]$ in $\widehat{F}_4^{\mathrm{d}}$ via cohomological parabolic induction. Here $d$ runs over $\frac{1}{2}\mathbb{P}$. Note that the $T$-parameters and $A$-parameters of this string are
$$
[2, -2, 2, 2 d+1], \quad [-1, 3, -1, -1]
$$
respectively. Accordingly, $2\lambda$ is the unique spin lowest $K$-type of $J(\lambda, -s\lambda)$ and it occurs exactly once. This explains the $18$th row of
Table \ref{table-F4-string-part}.
\hfill\qed
\end{example}

Other $s$-families are handled similarly. The final result is presented in Table \ref{table-F4-string-part}.

\begin{table}
\centering
\caption{The string part of $\widehat{F}_4^{\mathrm{d}}$}
\begin{tabular}{l|c|c|r}
\#$s$ &   $\lambda$  & spin LKT & mult \\
\hline
$1$ & $[a, b, c, d]$ & LKT& $1$ \\
$2$ & $[1, b, c, d]$ & LKT & $1$\\
$3$ &   $[a, 1, c, d]$ & LKT & $1$\\
$4$ &  $[a, b, 1, d]$ & LKT & $1$ \\
$5$ &    $[a, b, c, 1]$ & LKT & $1$\\
$6$ &    $[1, b, 1, d]$ & LKT & $1$\\
$7$ &   $[1, b, c, 1]$ & LKT & $1$\\
$8$ &  $[a, 1, c, 1]$ & LKT & $1$\\
$9$ & $[\frac{1}{2}, \frac{1}{2}, c, d]$ &  $2\lambda$ &  $1$ \\
$9$  & $[1, 1, c, d]$   & LKT &  $1$ \\
$10$ & $[a, \frac{1}{2}, \frac{1}{2}, d]$ & $2\lambda$  & $1$  \\
$12$ & $[a, b, \frac{1}{2}, \frac{1}{2}]$ & $2\lambda$  &   $1$\\
$12$ & $[a, b, 1, 1]$  & LKT  & $1$ \\
$13$ & $[\frac{1}{2}, \frac{1}{2}, c, 1]$ & $[1, 1, 2c+1, 0]$ &  $1$ \\
$13$ & $[1, 1, c, 1]$ & LKT  &  $1$ \\
$14$ & $[1, b, \frac{1}{2}, \frac{1}{2}]$ & $[0, 2b+1, 1, 1]$ & $1$\\
$14$ & $[1, b, 1, 1]$ & LKT & $1$\\
$15$ & $[\frac{1}{2}, \frac{1}{2}, \frac{1}{2}, d]$ &  $2\lambda$ &  $1$\\
$16$ &   $[a, 1, 1, d]$ & LKT & $1$\\
$16$ &   $[a, 1, \frac{1}{2}, d]$ & $[2a+1, 2, 0, 2d+1]$ & $1$\\
$20$ & $[a, \frac{1}{2}, \frac{1}{2}, 1]$ & $[2a, 3, 0,1]$ & $1$ \\
$23$ & $[\frac{1}{2}, \frac{1}{2}, 1, d]$ & $[3, 1, 0, 2d+2]$  & $1$\\
$23$ &  $[\frac{1}{2}, \frac{1}{2}, \frac{1}{2}, d]$ & $2\lambda$  &  $1$ \\
$27$ & $[a, 1, \frac{1}{2}, \frac{1}{2}]$ & $[2a+2, 0, 1, 2]$ & $1$ \\
$33$ & $[a, \frac{1}{2}, \frac{1}{2}, \frac{1}{2}]$ & $2\lambda$ &  $1$ \\
$34$ & $[1, 1, \frac{1}{2}, d]$ & $[3, 0, 0, 2d+3]$  &  $1$\\
$34$ & $[1, \frac{1}{2}, \frac{1}{2}, d]$ & $[1, 2, 0, 2d+1]$ & $1$\\
$47$ &  $[1, 1, 1, d]$ & LKT & $1$\\
$50$ & $[a, 1, 1, 1]$ & LKT & $1$ \\
$50$ & $[a, 1, \frac{1}{2}, \frac{1}{2}]$ & $[2a+2, 0, 2, 0]$ & $1$ \\
\end{tabular}
\label{table-F4-string-part}
\end{table}

\section{The scattered part of $\widehat{F}_4^{\mathrm{d}}$}

In this section, we use $\lambda=[a, b, c, d]$ to denote the weight $a\varpi_1+b\varpi_2+c\varpi_3+d\varpi_4$, where $a, b, c, d\in\frac{1}{2}\mathbb{P}$. Set $\mu:=\{\lambda+s\lambda\}$, which is the lowest $K$-type of $J(\lambda, -s\lambda)$.
Let us focus on the $103$ $s$-families where $I(s)$ are empty. To use the two discriminants $\Delta_1(\lambda)$ and $\Delta_2(\lambda)$ efficiently, we shall arrange these $s$-families according to their types, and each type essentially bears a common pattern. We will provide the common pattern for each type, and give a few examples.

The final result is presented in Table \ref{table-F4-scattered-part}.
We note that according to page 13 of \cite{BP}, there are ten representations in $\widehat{F}_4^{\mathrm{d}}$ with $\lambda=\frac{\rho}{2}$: three of them appear in  Table \ref{table-F4-scattered-part}, while the other ones are merged into seven strings in Table \ref{table-F4-string-part}. Note also that the last row of Table \ref{table-F4-scattered-part} is the trivial representation, and the penultimate row there is the minimal representation.

\begin{table}
\centering
\caption{The scattered part of $\widehat{F}_4^{\mathrm{d}}$}
\begin{tabular}{l|c|c|c|r}
\#$s$ &   $\lambda$   & spin LKT & mult &  u-small\\
\hline
$25$ & $[\frac{1}{2}, \frac{1}{2}, \frac{1}{2}, 1]$ & $[1, 3, 0, 1]$  & $1$ & Yes\\
$38$ & $\frac{\rho}{2}$ & $\rho$  &  $1$ & Yes\\
$62$ & $[1, 1, \frac{1}{2}, \frac{1}{2}]$ &  $[0, 0, 1, 4]$  & $1$  & Yes\\
$63$ & $[\frac{1}{2}, \frac{1}{2}, 1, 1]$ &  $[7, 1, 0, 0]$ & $1$  & Yes\\
$63$ & $\frac{\rho}{2}$ & $\rho$ & $1$  & Yes\\
$76$ & $[1, \frac{1}{2}, \frac{1}{2}, 1]$ &  $[4, 2, 0, 0]$ & $1$  & Yes\\
$92$ & $[1, \frac{1}{2}, \frac{1}{2}, \frac{1}{2}]$ &  $[2, 2, 0, 1]$ & $1$  & Yes\\
$122$ & $\frac{\rho}{2}$ & $\rho$ & $1$  & Yes\\
$140$ &  $[1,1,\frac{1}{2}, \frac{1}{2} ]$ & $[0,0,0,4]$ & $1$ & Yes\\
$140$ &  $\rho$ & $[0,0,0,0]$ & $1$ & Yes
\end{tabular}
\label{table-F4-scattered-part}
\end{table}

\subsection{Type $(1)$}
There are eleven involutions such that $\mu=x\varpi_1$ for any $\lambda$, where $x\in\mathbb{P}$. We refer to these $s$-families as families of type $(1)$, and adopt the following common pattern to handle them.
\begin{itemize}
\item[(a)] calculate $\Delta_1(\lambda)$ for $x\geq 10$, then $\{\mu-\rho\}+\rho=[x-9, 2, 2, 2]$.
\item[(b)] calculate $\Delta_2(\lambda)$ for the nine (possible) remaining  points:
$$
x=1, 2, 3, 4, 5, 6, 7, 8, 9.
$$
\end{itemize}

\begin{center}
\begin{tabular}{l|r}
Type &  \#s \\
\hline
$(1)$ & $63$, $76$, $92$, $109$, $110$, $120$, $122$, $130$, $132$, $138$, $139$\\
\end{tabular}
\end{center}

We give a concrete example to illustrate the above pattern.
\begin{example}
Let us consider the involution $s$ with index $63$. Then
$$
\mu=[a+3b+4c+2d, 0, 0, 0].$$
Thus $x=a+3b+4c+2d\geq 5$. When $x\geq 10$, we have
$$
\Delta_1(\lambda)=\frac{1}{3} x^2 +\frac{4}{3} ax +\left(\frac{4}{3}a^2 + \frac{8}{3} c^2 + \frac{8}{3} c d + \frac{8}{3} d^2\right)-35.
$$
The term in the bracket takes the minimal value $\frac{7}{3}$ when $a=c=d=\frac{1}{2}$. It is then easy to see that $\Delta_1(\lambda)>0$ when $x\geq 10$. Now it remains to calculate $\Delta_2(\lambda)$ for $x=5,6,7,8,9$. We only present the discussion for $x=5, 8$. In the former case, we must have $\lambda=\frac{\rho}{2}$, and it is in $\widehat{F}_4^{\mathrm{d}}$. When $x=8$, we can have $c=\frac{1}{2}$ or $1$. A little more calculation gives that there are seven choices for $\lambda$ in total, and $\Delta_2(\lambda)>0$ fails exactly in the following cases:
$$
\lambda=[\frac{1}{2}, \frac{1}{2}, 1, 1], \quad [1, 1, \frac{1}{2}, 1], \quad [\frac{1}{2}, \frac{3}{2}, \frac{1}{2}, \frac{1}{2}].
$$
\texttt{atlas} says that the first representation is unitary, while the other two are not. Then a closer look at the first representation says that it has a unique spin lowest $K$-type $[7, 1, 0, 0]$, which occurs with multiplicity one. Moreover,
 $$
 \|[7, 1, 0, 0]\|_{\mathrm{spin}}=\|2\lambda\|.
 $$
 Thus $J(\lambda, -s\lambda)\in \widehat{F}_4^{\mathrm{d}}$ for $\lambda=[\frac{1}{2}, \frac{1}{2}, 1, 1]$ by Proposition \ref{prop-D-spin-lowest}.
 \hfill\qed
\end{example}

\subsection{Type $(4)$}
There are eleven non-dominant involutions such that $\mu=x\varpi_4$ for any $\lambda$, where $x\in\mathbb{P}$. We refer to these $s$-families as families of type $(4)$, and adopt the following common pattern to handle them.
\begin{itemize}
\item[(a)] calculate $\Delta_1(\lambda)$ for $x\geq 7$, then $\{\mu-\rho\}+\rho=[2, 2, 2, x-6]$.
\item[(b)] calculate $\Delta_2(\lambda)$ for the six (possible) remaining  points:
$$
x=1, 2, 3, 4, 5, 6.
$$
\end{itemize}

\begin{center}
\begin{tabular}{l|r}
Type &  \#s \\
\hline
$(4)$ & $62$,  $77$, $93$, $108$, $111$, $121$, $123$, $129$, $131$, $136$, $137$
\end{tabular}
\end{center}

\subsection{Type $(13)$}
There are thirteen involutions such that $\mu=x\varpi_1+y\varpi_3$ for any $\lambda$, where $x, y\in\mathbb{P}$. We refer to these $s$-families as families of type $(13)$, and adopt the following common pattern to handle them.
\begin{itemize}
\item[(a)] calculate $\Delta_1(\lambda)$ for the following (possible) cases:
  \begin{itemize}
  \item[$\bullet$] $x\geq 2$ and $y\geq 3$, then $\{\mu-\rho\}+\rho=[x-1,2,y-2, 2]$.
  \item[$\bullet$] $x=1$ and $y\geq 3$, then $\{\mu-\rho\}+\rho=[2,1,y-2, 2]$.
  \item[$\bullet$] $x\geq 3$ and $y=2$, then $\{\mu-\rho\}+\rho=[x-2, 2, 1, 1]$.
  \item[$\bullet$] $x\geq 6$ and $y=1$, then $\{\mu-\rho\}+\rho=[x-5, 2,1, 2]$.
  \end{itemize}
\item[(b)] calculate $\Delta_2(\lambda)$ for the seven (possible) remaining  points:
$$
(x, y)=(1, 1), (2, 1), (3, 1), (4, 1), (5, 1), (1,2), (2, 2).
$$
\end{itemize}

\begin{center}
\begin{tabular}{l|r}
Type &  \#s \\
\hline
$(13)$ & $29$, $37$, $43$, $51$,  $53$, $54$, $66$, $67$, $82$, $83$, $101$, $102$, $112$
\end{tabular}
\end{center}

\subsection{Type $(23)$}
For the involution with index $60$ we have that $\mu=x\varpi_2+y\varpi_3$ for any $\lambda$, where $x, y\in\mathbb{P}$. We refer to this $s$-family as family of type $(23)$, and adopt the following pattern to handle it.
\begin{itemize}
\item[(a)] calculate $\Delta_1(\lambda)$ for the following (possible) cases:
  \begin{itemize}
  \item[$\bullet$] $x\geq 2$ and $y\geq 2$, then $\{\mu-\rho\}+\rho=[2,x-1,y-1,2]$.
  \item[$\bullet$] $x=1$ and $y\geq 3$, then $\{\mu-\rho\}+\rho=[1, 2, y-2, 2]$.
  \item[$\bullet$] $x\geq 4$ and $y=1$, then $\{\mu-\rho\}+\rho=[2, x-3, 2, 1]$.
  \end{itemize}
\item[(b)] calculate $\Delta_2(\lambda)$ for the four (possible) remaining  points:
$$
(x, y)=(1, 1),   (2, 1), (3, 1), (1, 2).
$$
\end{itemize}

\subsection{Type $(24)$}
There are eleven involutions such that $\mu=x\varpi_2+y\varpi_4$ for any $\lambda$, where $x,y\in\mathbb{P}$. We refer to these $s$-families as families of type $(24)$, and adopt the following common pattern to handle them.
\begin{itemize}
\item[(a)] calculate $\Delta_1(\lambda)$ for the following (possible) cases:
  \begin{itemize}
  \item[$\bullet$] $x\geq 4$ and $y\geq 2$, then $\{\mu-\rho\}+\rho=[2,x-3,2,y-1]$.
  \item[$\bullet$] $x\geq 4$ and $y=1$, then $\{\mu-\rho\}+\rho=[2,x-3,1,2]$.
  \item[$\bullet$] $x=3$ and $y\geq 2$, then $\{\mu-\rho\}+\rho=[1, 2, 1,y-1]$.
  \item[$\bullet$] $x=2$ and $y\geq 3$, then$\{\mu-\rho\}+\rho=[1, 2,1, y-2]$.
  \item[$\bullet$] $x=1$ and $y\geq 5$, then $\{\mu-\rho\}+\rho=[2,1,2, y-4]$.
  \end{itemize}
\item[(b)] calculate $\Delta_2(\lambda)$ for the seven (possible) remaining  points:
$$
(x, y)=(1, 1), (1, 2), (1, 3), (1, 4), (2, 1), (2, 2), (3, 1).
$$
\end{itemize}

\begin{center}
\begin{tabular}{l|r}
Type &  \#s \\
\hline
$(24)$ & $46$, $55$, $59$, $61$, $70$, $75$, $85$, $88$, $99$, $106$, $119$
\end{tabular}
\end{center}

\subsection{Type $(14)$}
There are thirteen involutions such that $\mu=x\varpi_1+y\varpi_4$ for any $\lambda$, where $x,y\in\mathbb{P}$. We refer to these $s$-families as families of type $(14)$, and adopt the following common pattern to handle them.
\begin{itemize}
\item[(a)] calculate $\Delta_1(\lambda)$ for the following (possible) cases:
  \begin{itemize}
  \item[$\bullet$] $x\geq 5$ and $y\geq 4$, then $\{\mu-\rho\}+\rho=[x-4,2, 2, y-3]$.
  \item[$\bullet$] $x=4$ and $y\geq 4$, then $\{\mu-\rho\}+\rho=[2,1,2, y-3]$.
  \item[$\bullet$] $x=3$ and $y\geq 4$, then $\{\mu-\rho\}+\rho=[2, 2, 1,y-3]$.
  \item[$\bullet$] $x=2$ and $y\geq 5$, then $\{\mu-\rho\}+\rho=[2, 1, 2, y-4]$.
  \item[$\bullet$] $x=1$ and $y\geq 6$, then $\{\mu-\rho\}+\rho=[1,2,2, y-5]$.
  \item[$\bullet$] $x\geq 5$ and $y=3$, then $\{\mu-\rho\}+\rho=[x-4, 2, 1,2]$.
  \item[$\bullet$] $x\geq 6$ and $y=2$, then $\{\mu-\rho\}+\rho=[x-5, 2, 1,2]$.
  \item[$\bullet$] $x\geq 8$ and $y=1$, then $\{\mu-\rho\}+\rho=[x-7, 2,2,1]$.
  \end{itemize}
\item[(b)] calculate $\Delta_2(\lambda)$ for the nineteen (possible) remaining  points:
$$
(x, 1), 1\leq x\leq 7; (x, 2), 1\leq x\leq 5; (x, 3), 1\leq x\leq 4; (x, 4), 1\leq x\leq 2; (1, 5).
$$
\end{itemize}

\begin{center}
\begin{tabular}{l|r}
Type &  \#s \\
\hline
$(14)$ & $38$, $52$, $57$, $69$, $72$, $84$, $89$, $96$, $103$, $107$, $114$, $118$, $125$
\end{tabular}
\end{center}

\subsection{Type $(134)$}
For the involution with index $11$ we have that $\mu=x\varpi_1+y\varpi_3+z\varpi_4$ for any $\lambda$, where $x,y,z\in\mathbb{P}$. We refer to this $s$-family as family of type $(134)$, and adopt the following common pattern to handle it.
\begin{itemize}
\item[(a)] calculate $\Delta_1(\lambda)$ for the following (possible) cases:
  \begin{itemize}
  \item[$\bullet$] $x\geq 2$ and $y\geq 2$, then $\{\mu-\rho\}+\rho=[x-1,2,y-1,z]$.
  \item[$\bullet$] $x\geq 3$, $y=1$ and $z\geq 2$, then $\{\mu-\rho\}+\rho=[x-2,2, 1, z-1]$.
  \item[$\bullet$] $x\geq 4$, $y=1$ and $z=1$, then $\{\mu-\rho\}+\rho=[x-3, 2, 1, 1]$.
  \item[$\bullet$] $x=1$, $y\geq 2$, then $\{\mu-\rho\}+\rho=[2, 1, y-1, z]$.

  \item[$\bullet$] $x=2$, $y=1$ and $z\geq 2$, then $\{\mu-\rho\}+\rho=[2, 1, 1, z-1]$.
  \item[$\bullet$] $x=1$, $y=1$ and $z\geq 3$, then $\{\mu-\rho\}+\rho=[1, 2, 1, z-2]$.
  \end{itemize}
\item[(b)] calculate $\Delta_2(\lambda)$ for the four (possible) remaining  points:
$$
(x, y, z)=(1,1,1), (1,1,2), (2, 1, 1), (3,1,1).
$$
\end{itemize}

\subsection{Type $(\pm\mp 3)$}
There are ten involutions such that $\mu$ is conjugate to $x\varpi_1-x\varpi_2+y\varpi_3$ for any $\lambda$, where $x\in\bbZ$ and $y\in\mathbb{P}$. We refer to these $s$-families as families of type $(\pm\mp 3)$. Whenever $x=0$, they will be of type $(3)$; whenever $x>0$, they will be of type $(23)$; and whenever $x<0$, they will be of type $(13)$.

\begin{center}
\begin{tabular}{l|r}
Type &  \#s \\
\hline
$(\pm\mp 3)$ & $39$, $41$, $58$, $68$, $74$, $80$, $87$, $95$, $98$, $113$
\end{tabular}
\end{center}

\subsection{Type $(\pm\mp 34)$}
There are five involutions such that $\mu$ is conjugate to $x\varpi_1-x\varpi_2+y\varpi_3+z\varpi_4$ for any $\lambda$, where $x\in\bbZ$ and $y, z\in\mathbb{P}$. We refer to these $s$-families as families of type $(\pm\mp 34)$.  Whenever $x=0$, they will be of type $(34)$; whenever $x>0$, they will be of type $(234)$; and whenever $x<0$, they will be of type $(134)$.

\begin{center}
\begin{tabular}{l|r}
Type &  \#s \\
\hline
$(\pm\mp 34)$ & $31$, $49$, $65$, $78$, $91$
\end{tabular}
\end{center}

\subsection{Type $(2\pm\mp)$}
There are eleven involutions such that $\mu$ is conjugate to $x\varpi_2+y\varpi_3-y\varpi_4$ for any $\lambda$, where $x\in\mathbb{P}$ and $y\in \bbZ$. We refer to these $s$-families as families of type $(2\pm\mp)$.  Whenever $y=0$, they will be of type $(2)$; whenever $y>0$, they will be of type $(24)$; and whenever $y<0$, they will be of type $(23)$.

\begin{center}
\begin{tabular}{l|r}
Type &  \#s \\
\hline
$(2\pm\mp)$ & $25$, $35$, $44$, $56$, $71$, $73$, $86$, $90$, $104$, $105$, $115$
\end{tabular}
\end{center}

\subsection{Type $(12\pm\mp)$}
There are four involutions such that $\mu$ is conjugate to $x\varpi_1+y\varpi_2+z\varpi_3-z\varpi_4$ for any $\lambda$, where $x, y\in\mathbb{P}$ and $z\in \bbZ$. We refer to these $s$-families as families of type $(12\pm\mp)$.  Whenever $z=0$, they will be of type $(12)$; whenever $z>0$, they will be of type $(124)$; and whenever $z<0$, they will be of type $(123)$.

\begin{center}
\begin{tabular}{l|r}
Type &  \#s \\
\hline
$(12\pm\mp)$ & $30$, $48$, $79$, $94$
\end{tabular}
\end{center}

\subsection{Remaining $s$-families}
There are twelve involutions whose types are no longer easily identified as above ones.
Their indices are
$$
81,  97, 100, 116, 117, 124, 126, 127, 128,
133, 134, 135.
$$
However, we can still handle them: there are just more cases. We illustrate the situation with an example.

\begin{example}
Let us consider the involution $s$ with index $100$. One calculates that there are five cases:
\begin{itemize}
  \item[$\bullet$]  $a>2d$, then $\mu=[a-2d,0,b+c+d,0]$ is of type $(13)$.
  \item[$\bullet$] $a=2d$, then $\mu=[0,0,b+c+d,0]$ is of type $(3)$.
  \item[$\bullet$] $d-b-c<a<2d$, then $\mu=[0,2d-a,a+b+c-d,0]$ is of type $(23)$.
  \item[$\bullet$] $a=d-b-c$,  then $\mu=[0,a+2b+2c,0, 0]$ is of type $(2)$.
  \item[$\bullet$] $a<d-b-c$, then $\mu=[0,a+2b+2c,0, d-a-b-c]$ is of type $(24)$.
\end{itemize}
Then for each case we can use the techniques from previous subsections. Finally, we know that there is no unitary representation in this $s$-family.
\qed
\end{example}

\section{Appendix}

In this appendix, we index all the involutions $s$ in the Weyl group of $F_4$ by presenting the weight $s\rho$.
\newpage
\begin{center}
\begin{tabular}{r|r|r|r|r|r}
Index &   $s\rho$  & Index &   $s\rho$ & Index &   $s\rho$\\ \hline
$1$ & $[1,1,1,1]$ & $2$  & $[-1,2,1,1]$ & $3$ & $[2,-1,2,1]$\\

$4$ & $[1,3,-1,2]$ & $5$  & $[1,1,2,-1]$ & $6$ & $[-1,4,-1,2]$\\

$7$ & $[-1,2,2,-1]$ & $8$  & $[2,-1,3,-1]$ & $9$ & $[-1,-1,3,1]$\\

$10$ & $[5,-3,1,3]$ & $11$  & $[4,1,-2,4]$ & $12$ & $[1,5,-1,-1]$\\

$13$ & $[-1,-1,4,-1]$ & $14$  & $[-1,6,-1,-1]$ & $15$ & $[3,-5,3,3]$\\

$16$ & $[5,-1,-1,4]$ & $17$  & $[4,5,-4,2]$ & $18$ & $[-5,1,1,4]$\\

$19$ & $[-3,5,-3,5]$ & $20$  & $[7,-5,4,-2]$ & $21$ & $[6,1,1,-4]$\\

$22$ & $[-4,-1,2,4]$ & $23$  & $[-5,3,-1,5]$ & $24$ & $[-3,9,-5,3]$\\

$25$ & $[5,-7,6,-2]$ & $26$  & $[10,-5,1,1]$ & $27$ & $[7,-1,2,-4]$\\

$28$ & $[1,1,-3,7]$ & $29$  & $[6,3,-1,-3]$ & $30$ & $[-7,1,5,-3]$\\

$31$ & $[-5,7,1,-5]$ & $32$  & $[1,-4,1,6]$ & $33$ & $[9,1,-3,1]$\\

$34$ & $[-1,-3,1,6]$ & $35$  & $[-6,-1,6,-3]$ & $36$ & $[-2,1,-2,7]$\\

$37$ & $[-11,5,1,1]$ & $38$  & $[-7,5,3,-5]$ & $39$ & $[-5,9,-1,-4]$\\

$40$ & $[1,-2,-1,7]$ & $41$  & $[7,-10,4,3]$ & $42$ & $[10,-1,-2,1]$\\

$43$ & $[10,-3,1,-2]$ & $44$  & $[5,5,-7,5]$ & $45$ & $[9,1,-2,-1]$\\

$46$ & $[1,1,4,-8]$ & $47$  & $[-1,-1,-1,7]$ & $48$ & $[-9,11,-4,1]$\\

$49$ & $[1,-6,8,-6]$ & $50$  & $[10,-1,-1,-1]$ & $51$ & $[-7,-4,5,2]$\\

$52$ & $[-1,-5,8,-6]$ & $53$  & $[-11,9,-2,1]$ & $54$ & $[-11,7,1,-2]$\\

$55$ & $[-2,1,5,-8]$ & $56$  & $[-6,11,-7,4]$ & $57$ & $[-9,11,-3,-1]$\\

$58$ & $[5,-10,8,-4]$ & $59$  & $[1,-2,6,-8]$ & $60$ & $[9,-4,-3,6]$\\

$61$ & $[3,7,-3,-5]$ & $62$  & $[-1,-1,6,-8]$ & $63$ & $[-11,9,-1,-1]$\\

$64$ & $[1,-11,7,1]$ & $65$  & $[1,10,-8,1]$ & $66$ & $[-1,-10,7,1]$\\

$67$ & $[-5,-6,8,-3]$ & $68$  & $[-9,6,-4,6]$ & $69$ & $[-1,11,-8,1]$\\

$70$ & $[-4,11,-4,-4]$ & $71$  & $[4,-11,4,4]$ & $72$ & $[1,-11,8,-1]$\\

$73$ & $[9,-6,4,-6]$ & $74$  & $[5,6,-8,3]$ & $75$ & $[1,10,-7,-1]$\\

$76$ & $[-1,-10,8,-1]$ & $77$  & $[-1,11,-7,-1]$ & $78$ & $[11,-9,1,1]$\\

$79$ & $[1,1,-6,8]$ & $80$  & $[-3,-7,3,5]$ & $81$ & $[-9,4,3,-6]$\\

$82$ & $[-1,2,-6,8]$ & $83$  & $[-5,10,-8,4]$ & $84$ & $[9,-11,3,1]$\\

$85$ & $[6,-11,7,-4]$ & $86$  & $[2,-1,-5,8]$ & $87$ & $[11,-7,-1,2]$\\

$88$ & $[11,-9,2,-1]$ & $89$  & $[1,5,-8,6]$ & $90$ & $[7,4,-5,-2]$\\

$91$ & $[-10,1,1,1]$ & $92$  & $[-1,6,-8,6]$ & $93$ & $[9,-11,4,-1]$\\

$94$ & $[1,1,1,-7]$ & $95$  & $[-1,-1,-4,8]$ & $96$ & $[-9,-1,2,1]$\\

$97$ & $[-5,-5,7,-5]$ & $98$  & $[-10,3,-1,2]$ & $99$ & $[-10,1,2,-1]$\\

$100$ & $[-7,10,-4,-3]$ & $101$  & $[-1,2,1,-7]$ & $102$ & $[5,-9,1,4]$\\

$103$ & $[7,-5,-3,5]$ & $104$  & $[11,-5,-1,-1]$ & $105$ & $[2,-1,2,-7]$\\

$106$ & $[6,1,-6,3]$ & $107$  & $[1,3,-1,-6]$ & $108$ & $[-9,-1,3,-1]$\\

$109$ & $[-1,4,-1,-6]$ & $110$  & $[5,-7,-1,5]$ & $111$ & $[7,-1,-5,3]$\\

$112$ & $[-6,-3,1,3]$ & $113$  & $[-1,-1,3,-7]$ & $114$ & $[-7,1,-2,4]$\\

$115$ & $[-10,5,-1,-1]$ & $116$  & $[-5,7,-6,2]$ & $117$ & $[3,-9,5,-3]$\\

$118$ & $[5,-3,1,-5]$ & $119$  & $[4,1,-2,-4]$ & $120$ & $[-6,-1,-1,4]$\\

$121$ & $[-7,5,-4,2]$ & $122$  & $[3,-5,3,-5]$ & $123$ & $[5,-1,-1,-4]$\\

$124$ & $[-4,-5,4,-2]$ & $125$  & $[-5,1,1,-4]$ & $126$ & $[-3,5,-3,-3]$\\

$127$ & $[1,-6,1,1]$ & $128$  & $[1,1,-4,1]$ & $129$ & $[-1,-5,1,1]$\\

$130$ & $[-4,-1,2,-4]$ & $131$  & $[-5,3,-1,-3]$ & $132$ & $[1,1,-3,-1]$\\

$133$ & $[-2,1,-3,1]$ & $134$  & $[1,-2,-2,1]$ & $135$ & $[1,-4,1,-2]$\\

$136$ & $[-1,-1,-2,1]$ & $137$  & $[-1,-3,1,-2]$ & $138$ & $[-2,1,-2,-1]$\\

$139$ & $[1, -2, -1, -1]$ & $140$  & $[-1, -1, -1, -1]$ &  &
\end{tabular}
\end{center}

\end{document}